\newcommand{\R}{{\mathbb R}}
\newcommand{\Rnn}{{\mathbb R}_{\ge 0}}
\newcommand{\Rp}{{\mathbb R}_{> 0}}
\newcommand{\C}{{\mathbb C}}
\newcommand{\cB}{{\mathcal B}}
\newcommand{\cC}{{\mathcal C}}
\newcommand{\cD}{{\mathcal D}}
\newcommand{\cE}{{\mathcal E}}
\newcommand{\cK}{{\mathcal K}}
\newcommand{\cT}{{\mathcal T}}
\newcommand{\cU}{{\mathcal U}}
\newcommand{\diag}{{\mathrm{diag}}}
\newcommand{\Lone}{{{\mathbb L}_1}}
\newcommand{\argmin}{\mathop{\mathrm{argmin}}}
\newcommand{\bfone}{\mathbf 1}
\newcommand{\dom}{\mathrm{dom}}
\def\QED{\mbox{\rule[0pt]{1.3ex}{1.3ex}}} 
\newenvironment{proof}{{\quad \it Proof:\,}}{\hfill \QED \par}
\newtheorem{thm}{Theorem}
\newtheorem{cor}{Corollary}
\newtheorem{defn}{Definition}
\newtheorem{lem}{Lemma}
\newtheorem{prop}{Proposition}
\newenvironment{proof-of}[1]{{\quad\it Proof of #1:\,}}{\hfill\QED\par}
\title{An Optimal Control Formulation of Pulse-Based Control Using Koopman Operator
	\author{Aivar Sootla, Alexandre Mauroy and Damien Ernst
		\thanks{Aivar Sootla is with the Department of Engineering Science, University of Oxford, Parks Road, Oxford, OX1 3PJ, UK {\tt aivar.sootla@eng.ox.ac.uk}.}
		\thanks{Alexandre Mauroy is with Namur Center for Complex Systems (naXys) and Department of Mathematics, University of Namur, B-5000, Belgium {\tt alexandre.mauroy@unamur.be}}
		\thanks{Damien Ernst is with Montefiore Institute, University of Li\`{e}ge,  All\'ee de la D\'ecouverte 10, B-4000, Li\`{e}ge, Belgium. {\tt dernst@ulg.ac.be}}
		\thanks{This work was performed while A. Sootla held a postdoctoral position at University of Li\`{e}ge funded by F.R.S.--FNRS. Currently, Aivar Sootla is supported by the EPSRC Grant EP/M002454/1.}
	}
}
\begin{document}
\maketitle
\IEEEpeerreviewmaketitle

\begin{abstract}
	In many applications, and in systems/synthetic biology in particular, it is desirable to compute control policies that force the trajectory of a bistable system from one equilibrium (the initial point) to another equilibrium (the target point), or in other words to solve the switching problem. It was recently shown that, for monotone bistable systems, this problem admits easy-to-implement open-loop solutions in terms of temporal pulses (i.e., step functions of fixed length and fixed magnitude). In this paper, we develop this idea further and formulate a problem of convergence to an equilibrium from an arbitrary initial point. We show that this problem can be solved using a static optimization problem in the case of monotone systems.	Changing the initial point to an arbitrary state allows building closed-loop, event-based or open-loop policies for the switching/convergence problems. In our derivations, we exploit the Koopman operator, which offers a linear infinite-dimensional representation of an autonomous nonlinear system. One of the main advantages of using the Koopman operator is the powerful computational tools developed for this framework. Besides the presence of numerical solutions, the switching/convergence problem can also serve as a building block for solving more complicated control problems and can potentially be applied to non-monotone systems. We illustrate this argument on the problem of synchronizing cardiac cells by defibrillation. Potentially, our approach can be extended to problems with different parametrizations of control signals since the only fundamental limitation is the finite time application of the control signal.
\end{abstract}
\begin{IEEEkeywords}
Monotone Systems, Koopman Operator, Isostables, Generalized Repressilator, Genetic Toggle Switch
\end{IEEEkeywords}

\section{Introduction}

Synthetic biology is an active field of research with applications in metabolic engineering, bioremediation and energy sector~(\cite{Purnick:2009}). One of the main goals of synthetic biology is to engineer biological functions in living cells~(\cite{brophy2014principles}) and control theory naturally plays an essential role toward that end. Control theoretic regulation of protein levels in microbes was achieved by \cite{milias2011silico}, \cite{Menolascina:2011}, and \cite{uhlendorf2012long}. However, these works result in time-varying feedback control signals, which are affected by limitations due to physical constraints not only in sensing but also in actuation. Actuation limitations are observed with chemical induction, for instance. While the concentration of a chemical can be easily increased by adding this chemical to a culture, it is much more complicated to decrease its concentration (for instance, diluting is a labor-intensive process that is not suited to be performed repeatedly). Therefore, one of the challenges in synthetic biology is to derive control policies that not only achieve the desired objectives but are also simple enough to deal with the actuation limitations. In this context, an example of such control policies is based on temporal pulses, on which the present paper focuses.

One of the basic but nevertheless important control problems is \emph{the problem of convergence to an equilibrium}. In synthetic biology, it can, for instance, be motivated by the \emph{genetic toggle switch} circuit, which is a major building block in applications. The genetic toggle switch by \cite{Gardner00}, for example, consists of two interacting genes. Their design ensured that the concentration of proteins expressed by one gene is always much higher than the concentration of proteins expressed by the other gene: one gene is switched ``on'' while the other is switched ``off''. In this case, the system is bistable and the control objective of the problem is to drive the state from one equilibrium (e.g. one gene switched ``on'') to the other (e.g. the other gene switched ``on'') in minimum time and given a fixed energy budget (e.g. to avoid cell death). In this paper, we propose to solve this convergence problem using a specific set of signals, namely  
temporal pulses $u(t)$ with fixed length $\tau$ and magnitude $\mu$: 
\begin{equation}\label{eq:pulse}
u(t) = \mu h(t,\tau) \qquad 
h(t,\tau) = 
\begin{cases} 1 & 0 \leq t \leq \tau\,,\\
0 & t >\tau\,.
\end{cases}
\end{equation}
The main challenge in solving this convergence problem is the absence of well-developed theory and computational methods. This can be explained by the fact that classical optimal control methods rely on the semigroup property (of the value function and/or the dual variable), while our parametrization of control signals violates it. We, therefore, restrict our analysis to the class of monotone systems, properties of which allow simpler solutions. We note, however, that monotone systems play an important role in systems/synthetic biology and hence there exist many applications with this restriction. We solve the convergence problem by computing only one time-independent function, which we call the pulse control function and denote by $r$. This function links all the tunable parameters of the convergence problem so that finding a tradeoff between the convergence time and the energy budget becomes straightforward.

In our solution, we use the Koopman operator (cf.~\cite{mezic2005}), which offers a linear infinite dimensional description of a nonlinear system and allows a spectral approach to the nonlinear system. In particular, the definition of the pulse control function involves the dominant eigenfunction of the Koopman operator of the unforced system (i.e., when the control signal is equal to zero). This allows to use computational methods developed in the Koopman operator framework to solve our problem. In particular, we show that the function $r$ can be computed with Laplace averages (\cite{mauroy2013isostables}).

Our theoretical results generally do not apply to non-monotone systems. Let alone a solution using pulses may not exist. However, we can still use the function $r$ in some cases to derive control policies for non-monotone systems. Furthermore, we can use our framework to solve more complicated control problems. We illustrate this idea on the problem of  synchronization of cardiac cells modeled by non-monotone FitzHugh-Nagumo systems (\cite{fitzhugh1961impulses,nagumo1962active}).

\emph{Related Work.} \cite{sootla2015pulsesaut} proposed to solve the problem of switching between exponentially stable equilibria in monotone systems using temporal pulses. The authors derived a computational procedure to estimate the set of control signals allowing the switch. The control strategy proposed by~\cite{sootla2015pulsesaut} is open loop so that the control signal cannot be adjusted during the experiment. \cite{sootla2016nolcos} considered a similar setting, but using the Koopman operator framework to estimate the set of all switching pulses and provide estimates for convergence time. In this paper, we present an optimization program for solving the convergence problem, which is a generalization of the switching problem considered by~\cite{sootla2015pulsesaut} and~\cite{sootla2016nolcos}.
	
\cite{mauroy2014converging} considered the convergence/escape problem cast in the Koopman framework formalism. \cite{mauroy2014converging} did not restrict the class of systems but assumed full controllability, i.e. control signals affect all the states in an affine manner. \cite{Wilson2014} proposed to synchronize the cardiac cells by using geometric tools from Koopman operator framework and the techniques based on the Pontryagin's maximum principle. The authors did not parametrize the control signal, which led to complicated time-dependent optimal control signals. We will show that our scheme still achieves synchronization, but in the longer time than the optimal solution by~\cite{Wilson2014}.

To summarize, in this paper, we consider a convergence problem similarly to~\cite{mauroy2014converging}, the target set is chosen similarly to~\cite{Wilson2014}, while the class of systems and the control signals are restricted as in the works by~\cite{sootla2015pulsesaut} and~\cite{sootla2016nolcos}. We opted for restricting the class of systems in order to show optimality of our approach in this specific case, which indicates that systems' properties can be used to derive easy-to-implement and interpretable optimal solutions. 

\emph{Outline of the Paper}. The rest of the paper is organized as follows. In Section~\ref{s:prel}, we cover basic definitions and properties of monotone systems and we introduce the Koopman operator framework. We formulate and discuss our convergence problem in Section~\ref{s:sw-con-prob}, while also presenting the main result, which we prove in Section~\ref{s:proof}. In Section~\ref{s:examples}, we apply the theoretical results to the switching problem (open and closed loop control) and to the synchronization of cardiac cells.

\section{Preliminaries}
\label{s:prel}

Consider a system of the form
\begin{equation}
\label{sys:f}
\dot x = f(x,u),\quad x(0) = x_0,
\end{equation} 
with $f: \cD\times \cU\rightarrow \R^n$, $u:\R \rightarrow \cU$, and where $\cD\subset\R^n$, $\cU\subset\R$ are open and $u$ belongs to the space $\cU_{\infty}$ of Lebesgue measurable functions with values from $\cU$. We assume that $f(x, u)$ is twice continuously differentiable ($C^2$) in $(x, u)$ on $\cD\times \cU$. 
The flow map $\phi: \R \times \cD \times \cU_{\infty}\rightarrow \R^n$ induced by the system is such that $\phi(t, x_0, u)$ is a solution of~\eqref{sys:f} with an initial condition $x_0$ and a control signal $u$. We denote the differential of a function $g(x, y):\R^n\times\R^m\to \R^k$ with respect to $x$ as $\partial_x g(x, y)$.  Let $J(x)$ denote the Jacobian matrix of $f(x,0)$ (i.e., $J(x) = \partial_x f(x,0)/\partial x$).  
For every stable fixed point $x^\ast$ of the system, we assume that the eigenvectors of $J(x^\ast)$ are linearly independent (i.e., $J(x^\ast)$ is diagonalizable). The eigenvalues of $J(x^\ast)$ are denoted by $\lambda_i$ with $i = 1,\dots, n$ and are ordered by their real part, that is $\Re(\lambda_i) \ge \Re(\lambda_j)$ for all $i$, $j$.  
We also denote the positive orthant by $\Rp^n = \{x\in\R^n | x_i > 0,\, i = 1,\dots, n\}$, and the nonnegative orthant by $\Rnn^n = \{x\in\R^n | x_i \ge 0,\, i = 1,\dots, n\}$.

\subsection{Koopman operator}

Autonomous nonlinear systems can be studied in the framework of the Koopman operator. A semigroup of Koopman operators acts on functions $g:\R^n\rightarrow \C$ (also called observables) and is defined by
\begin{gather}
U^t g(x) = g \circ \phi(t,x,0), \qquad t \geq 0
\end{gather}
where $\circ$ is the composition of functions. Provided that the vector field and observables are $C^1$, one can define the infinitesimal generator of the operator as $L g(x) = (f(x, 0))^T \nabla g(x)$ on a compact set. The semigroup is linear (cf.~\cite{mezic2013analysis}) and can be studied through its spectral properties. In this paper, we will limit our use of the Koopman operator to unforced systems~\eqref{sys:f} on a basin of attraction of an exponentially stable equilibrium $x^\ast$ (that is, the eigenvalues $\lambda_j$ of $J(x^\ast)$ are such that $\Re(\lambda_j) < 0$ for all $j$). The basin of attraction is defined by $\cB(x^\ast) = \{x\in \R^n | \lim_{t\rightarrow\infty}\phi(t,x,0) = x^\ast \}$. In this case, the eigenvalues $\lambda_j$ of the Jacobian matrix $J(x^\ast)$ are also the so-called Koopman eigenvalues, which are associated with the Koopman eigenfunctions $s_j: \cB \rightarrow \C$ satisfying
\begin{gather}
\label{eq:property_eigenf}
U^t s_j(x) = s_j(\phi(t, x,0)) = s_j(x) \, e^{\lambda_j t}, \quad x\in \cB,
\end{gather}
or equivalently
\begin{gather}
\label{eq:property_eigenf2}
\nabla s_j(x)^T f(x, 0) = \lambda_j s_j(x).
\end{gather}
If the vector field $f(\cdot, 0)$ is a $C^2$ function and the Jacobian matrix $J(x^\ast)$ is diagonalizable, then the Koopman eigenfunctions $s_j$ belong to $C^1$~(\cite{mauroy2014global}).

We refer to an eigenvalue $\lambda_1$ satisfying $\Re(\lambda_1) > \Re(\lambda_j)$ for all $\lambda_1 \ne \lambda_j$ as \emph{the dominant eigenvalue}. We assume that such an eigenvalue exists (it is the case for monotone systems) and we call the associated eigenfunction $s_1$ \emph{the dominant eigenfunction}. If the dominant eigenvalue is such that $\Re(\lambda_1)<0$, then the dominant eigenfunction $s_1$ can be computed through the Laplace average
\begin{gather}\label{laplace-average}
g_\lambda^\ast(x) = \lim\limits_{t\rightarrow \infty}\frac{1}{T}\int\limits_0^T (g\circ \phi(t, x,0)) e^{-\lambda t} d t.
\end{gather}
For all $g\in C^1$ that satisfy $g(x^\ast)=0$ and $(\nabla g(x^\ast))^T v_1 \neq 0$, where $v_1$ is the right eigenvector of $J(x^\ast)$ corresponding to $\lambda_1$, the Laplace average $g_{\lambda_1}^\ast$ is equal to $s_1(x)$ up to a multiplication with a scalar. If the algebraic and geometric multiplicities of $\lambda_1$ are equal to $\mu_1$, then there are $\mu_1$ independent eigenfunctions associated with $\lambda_1$ and they can be computed by choosing linearly independent right eigenvectors of $J(x^\ast)$ corresponding to $\lambda_1$. The eigenfunctions $s_j(x)$ with $j\ge 2$ are generally harder to compute using Laplace averages, but can be obtained through other methods such as linear algebraic methods by~\cite{mauroy2014global}. The eigenfunctions can also be estimated from data by using the so-called dynamic mode decomposition (DMD) method (cf.~\cite{Schmid2010,Tu2014}). 

The Koopman eigenfunctions capture important geometric properties of the system. In particular, the dominant Koopman eigenfunction $s_1$ is related to the notion of \emph{isostables}. 

\begin{defn} Suppose that $s_1 \in C^1$. An isostable $\partial \cB_\alpha$ associated with the value $\alpha > 0$ is the boundary of the set
$\cB_\alpha =  \{x\in \R^n \,|\, |s_1(x)| \le \alpha \}$, that is
\begin{equation*}
\partial \cB_\alpha =  \{x \in \R^n \,|\, |s_1(x)| = \alpha \}.
\end{equation*}
\end{defn}

A more rigorous definition based on the flow is due to~\cite{mauroy2013isostables}. Isostables are useful from a control perspective since they capture the dominant (or asymptotic) behavior of the unforced system. Indeed, it can be shown that trajectories with initial conditions on the same isostable $\partial \cB_{\alpha_1}$ converge synchronously toward the equilibrium, and reach other isostables $\partial \cB_{\alpha_2}$, with $\alpha_2<\alpha_1$, after a time
\begin{equation}
\label{time_isostable}
\cT = \frac{1}{|\Re(\lambda_1)|} \ln \left(\frac{\alpha_1}{\alpha_2}\right)\,.
\end{equation}
In the case $\lambda_1 \in \R$, for example, it can be shown that the trajectories starting from $\partial \cB_\alpha$ share the same asymptotic evolution
\begin{equation*}
\phi(t,x,0) \rightarrow x^\ast + v_1 \, \alpha  e^{\lambda_1 t}\,, \quad t\rightarrow \infty\,.
\end{equation*}

\subsection{Monotone Systems}
We consider systems that preserve a partial order induced by a nonnegative orthant $\Rnn^n$, but our results can be extended to other cones $\cK$ in $\R^n$. 
We define \emph{a partial order} $\succeq$ as follows: $x\succeq y$ if and only if $ x - y \in \Rnn^n$ (we write $x\not \succeq y$ if the relation  $x \succeq y$ does not hold). We will also write $x\succ y$ if $x\succeq y$ and $x\ne y$, and $x\gg y$ if $x- y \in \Rp^n$.  Similarly, a partial order can be defined on the space of signals $u\in \cU_{\infty}$: $u\succeq v$ if $u(t) - v(t) \in \Rnn^n$ for all $t\ge 0$. 
We also introduce concepts that are important for our subsequent discussion. Let $[x,~y]$ denote an interval in the order $\preceq$, that is $[x,~y] = \{ z \in\R^n | x\preceq z \preceq y \}$. 	
For a function $W:\R^n \rightarrow \R\cup \{-\infty, +\infty\}$, we refer to  the set $\dom(W) = \{x\in \R^n | |W(x)| < \infty  \}$ as its effective domain. A function $W:\R^n \rightarrow \R\cup \{-\infty, +\infty\}$ is called \emph{increasing} if $W(x) \ge W(y)$ for all $x\succeq y$ on $\dom(W)$. Control systems in the form~\eqref{sys:f} whose flows preserve a partial order relation $\succeq$ are called \emph{monotone systems}. 
\begin{defn}\label{def:mon}
The system $\dot x = f(x,u)$ is called \emph{monotone} if $\phi(t,x, u)\preceq \phi(t,y, v)$ for all $t\ge 0$, and for all $x\preceq y$, $u\preceq v$. 
\end{defn}
\begin{defn} The unforced system $\dot x = f(x, 0)$ is \emph{strongly monotone} 
if it is monotone and $x\prec y$ implies that $\phi(t,x, 0)\ll \phi(t,y, 0)$ for all $t> 0$.
\end{defn}
A certificate for monotonicity is a condition on the vector field, for which we refer the reader to~\cite{angeli2003monotone}. We finally consider the spectral properties of unforced monotone systems that are summarized in the following result. 
\begin{prop} \label{prop:mon-eig-fun}
Consider that the system $\dot x = f(x)$ with $f\in C^2(\cD)$ has an exponentially stable equilibrium $x^\ast$ and let $\lambda_j$ be the eigenvalues of $J(x^\ast)$ such that $\Re(\lambda_i)  \ge \Re(\lambda_j)$ for all $i \le j$.\\
(i)  If the system is monotone with respect to $\cK$ on a set $\cC\subseteq \cB(x^\ast)$, then $\lambda_1$ is real and negative, the right eigenvector $v_1$ of $J(x^\ast)$ can be chosen such that $v_1 \succ 0$, while the eigenfunction $s_1$ can be chosen such that $s_1(x) \ge s_1(y)$ for all $x$, $y\in \cC$ satisfying $x\succeq y$. \\
(ii) Furthermore, if the system is strongly monotone with respect to $\cK$ on a set $\cC \subseteq \cB(x^\ast)$ then $\lambda_1$ is simple, real and negative, $\lambda_1 > \Re(\lambda_j)$ for all $j \ge 2$, $v_1$ and $s_1$ can be chosen such that $v_1\gg 0$ and $s_1(x)> s_1(y)$ for all $x$, $y\in \cC$ satisfying $x\succ y$; 
\end{prop}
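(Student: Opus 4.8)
The plan is to separate the spectral statements about the Jacobian $J(x^\ast)$, which are pure Perron--Frobenius theory, from the statement about the eigenfunction $s_1$, for which the Laplace-average representation of Section~\ref{s:prel} does the work.

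\emph{Eigenvalue and right eigenvector.} Monotonicity of the flow is inherited by the linearization at $x^\ast$: from $\phi(t,x^\ast+h,0)\succeq x^\ast$ for small $h\succeq 0$, dividing by $\|h\|$ and letting $h\to 0$, one gets that $e^{J(x^\ast)t}$ leaves the closed cone $\cK$ invariant for every $t\ge 0$, i.e.\ $J(x^\ast)$ is cross-positive with respect to $\cK$. The Perron--Frobenius/Krein--Rutman theorem for such matrices then gives that the spectral abscissa $\max_j\Re(\lambda_j)=\Re(\lambda_1)$ is itself an eigenvalue, hence $\lambda_1\in\R$, with an associated eigenvector in $\cK\setminus\{0\}$, i.e.\ $v_1\succ 0$; that $\lambda_1<0$ follows from exponential stability of $x^\ast$. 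Under strong monotonicity the linearized semigroup additionally maps $\cK\setminus\{0\}$ into $\inter\cK$ for $t>0$ — a classical fact obtained from irreducibility of the variational equation — and the strong form of Perron--Frobenius then yields that $\lambda_1$ is algebraically simple, that $\lambda_1>\Re(\lambda_j)$ for all $j\ge 2$, and that $v_1$ can be chosen in $\inter\cK$, i.e.\ $v_1\gg 0$.

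\emph{Eigenfunction.} I would apply the same theory to $J(x^\ast)^{T}$, whose semigroup $(e^{J(x^\ast)t})^{T}$ preserves the dual cone $\cK^\ast$: this produces a left eigenvector $w_1\in\cK^\ast\setminus\{0\}$ of $J(x^\ast)$ for $\lambda_1$, with $w_1\in\inter\cK^\ast$ when the system is strongly monotone. Take the linear observable $g(x)=w_1^{T}(x-x^\ast)$, which is $C^1$, vanishes at $x^\ast$, and has $\nabla g(x^\ast)=w_1\neq 0$; by \eqref{laplace-average} and the discussion following it, $g_{\lambda_1}^\ast$ is a nonzero multiple of the dominant eigenfunction, and since $s_1$ is defined only up to a scalar we set $s_1=g_{\lambda_1}^\ast$. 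Now let $x\succeq y$ in $\cC$. Monotonicity gives $\phi(t,x,0)-\phi(t,y,0)\in\cK$ for all $t\ge 0$, hence $e^{-\lambda_1 t}\,w_1^{T}\bigl(\phi(t,x,0)-\phi(t,y,0)\bigr)\ge 0$ for every $t$; passing to the limit in the Laplace average \eqref{laplace-average} gives $s_1(x)-s_1(y)\ge 0$, which is the monotonicity asserted in (i).

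\emph{Strictness.} The remaining point, and the one I expect to be the main obstacle, is to sharpen this to $s_1(x)>s_1(y)$ for $x\succ y$ when the system is strongly monotone. Suppose to the contrary that $s_1(x)=s_1(y)$. Replacing $x,y$ by $\phi(\varepsilon,x,0),\phi(\varepsilon,y,0)$ for a small $\varepsilon>0$ — which by \eqref{eq:property_eigenf} merely scales both values of $s_1$ by $e^{\lambda_1\varepsilon}$ — strong monotonicity lets us assume $x\gg y$, so that $x\succeq z$ for every $z$ in a neighborhood of $y$. The non-strict inequality just proved then forces $s_1(z)\le s_1(x)=s_1(y)$ on that neighborhood, so $s_1$ has a local maximum at $y$; as $s_1\in C^1$ this gives $\nabla s_1(y)=0$, and substituting into \eqref{eq:property_eigenf2} yields $\lambda_1 s_1(y)=0$, hence $s_1(x)=s_1(y)=0$ since $\lambda_1\neq 0$. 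By the non-strict monotonicity of $s_1$ the whole solid order interval $[y,~x]$ lies in $\{s_1=0\}$, and so does its image $[\phi(t,y,0),\phi(t,x,0)]$ under $\phi(t,\cdot,0)$ for every $t\ge 0$. Choosing $t$ large, this places a solid order interval inside $\{s_1=0\}$ arbitrarily close to $x^\ast$; but linearizing \eqref{eq:property_eigenf2} at $x^\ast$ shows $\nabla s_1(x^\ast)$ is a nonzero multiple of $w_1$, so near $x^\ast$ the set $\{s_1=0\}$ is a $C^1$ hypersurface and cannot contain a solid interval — a contradiction, so $s_1(x)>s_1(y)$. Apart from this argument, the only other delicate ingredient is the passage from strong monotonicity of the nonlinear flow to eventual strong positivity of its linearization, which is what makes the strong Perron--Frobenius theorem applicable in the first step.
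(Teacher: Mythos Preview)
The paper does not prove Proposition~\ref{prop:mon-eig-fun} at all: immediately after the statement it reads ``The result is from~\cite{sootla2016basins}'' and moves on. So there is no in-paper proof to compare against; what you have written is an independent argument for a result the authors import wholesale.

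On its own merits your sketch is sound and follows the natural line one would expect in the cited reference: Perron--Frobenius/Krein--Rutman for cone-preserving semigroups handles $\lambda_1\in\R_{<0}$ and $v_1\in\cK$ (resp.\ $\inter\cK$), the dual version produces $w_1\in\cK^\ast$, and the Laplace average \eqref{laplace-average} with $g(x)=w_1^T(x-x^\ast)$ transports order-preservation of the flow to order-preservation of $s_1$. The strictness argument via a local maximum, $\nabla s_1(y)=0$, \eqref{eq:property_eigenf2}, and the impossibility of a solid interval inside the $C^1$ hypersurface $\{s_1=0\}$ near $x^\ast$ is correct in spirit and is the step most worth writing out carefully.

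Two places deserve a sentence of extra care if you intend this as a full proof. First, in part~(i) you need $w_1^T v_1\neq 0$ for the Laplace average to return a nonzero multiple of a dominant eigenfunction; with $v_1\in\partial\cK$ and $w_1\in\partial\cK^\ast$ this pairing can vanish, so in the merely-monotone case you should indicate why compatible choices exist (e.g.\ biorthogonality within the $\lambda_1$-eigenspaces). Second, the local-maximum step implicitly uses non-strict monotonicity of $s_1$ on a full open neighborhood of $y$, whereas you only established it on $\cC$; you should either assume $\cC$ is open and forward-invariant (as the paper effectively does via Assumption~A3), or note that after flowing for a short time the relevant points lie in such a neighborhood inside $\cB(x^\ast)$.
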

The result is from~\cite{sootla2016basins}. Without loss of generality, we will assume that a dominant eigenfunction $s_1$ is increasing even if $\lambda_1$ is not simple. 

\section{Convergence to an Isostable Problem} \label{s:sw-con-prob}
\subsection{Problem Formulation and Discussion}
In order to formulate the basic problem we want to address, consider the following assumptions:
\begin{enumerate}
	\item[{\bf A1.}] The vector field $f(x,u)$ in~\eqref{sys:f} is twice continuously differentiable in $(x, u)$ on $\cD\times \cU$. 
	\item[{\bf A2.}] The unforced system~\eqref{sys:f} has an exponentially stable equilibrium $x^\ast$ in $\cD$ with a diagonalizable $J(x^\ast)$. 
	\item[{\bf A3.}] The system is monotone with respect to $\Rnn^n\times\R$ and forward-invariant on $\cD\times\cU$ in the sense that for all $x\in \cD$, $u\in\cU_{\infty}$, the flow $\phi(t,x,u(\cdot))$ belongs to $\cD$. 
	\item[{\bf A4.}] The eigenfunction $s_1(x)$ is such that 
$\nabla s_1(x) \gg 0$ for all $x\in \dom(s_1)$. 
	\item[{\bf A5.}] $f(x, \mu_1) \succ f(x, \mu_2)$ for all $x\in\cD$ and $\mu_1> \mu_2\ge 0$.
	\item[{\bf A6.}] The space of control signals is limited to temporal pulses $u(t) = \mu h(t,\tau)$, where $h$ is defined in \eqref{eq:pulse}.
	\end{enumerate}
Assumption~{A1} guarantees existence and uniqueness of solutions, while Assumption~{A2} introduces a reference point $x^\ast$. These assumptions are perhaps more restrictive than the ones usually met in control theory. That is, $f(x,u)$ is usually assumed to be Lipschitz continuous in $x$ for every fixed $u$, and the equilibria are asymptotically stable. Our assumptions are guided by our consequent use of the Koopman operator. Assumptions~A1 and~A2 guarantee the existence of continuously-differentiable eigenfunctions on the basin of attraction $\cB(x^\ast)$ of $x^\ast$. Monotonicity is crucial, but forward-invariance on $\cD\times\cU$ is a rather technical assumption on which our computational methods do not rely. Assumption~A4 is well-posed since $s_1\in C^1$ due to Assumption~A1. If Assumptions A1 -- A3 hold, then we have $\nabla s_1(x) \succ 0$ and $f(x,\mu_1) \succeq f(x, \mu_2)$ for $\mu_1 > \mu_2 \ge0$, hence Assumptions~A4 and~A5 serve as technical assumptions that guarantee uniqueness of solutions and a certain degree of regularity. We will comment throughout the paper on the case when Assumptions A4 -- A5 do not hold. Assumption~A6 is guided by many applications, where there is a need to parametrize in advance the control signal. In this paper, we choose the easiest parametrization, although the only fundamental limitation is to set $u(t)$ to zero after some time $\tau$. It is, therefore, possible to generalize our approach to more complicated control signals. We proceed by formulating a basic but fundamental problem.

\emph{Problem 1. Converging to an isostable.} Consider the system $\dot x = f(x,u)$ satisfying assumptions {A1--A6} and the initial state $x_0$. Compute a control signal $u(t) = \mu h(t,\tau)$ such that the flow $\phi(t,x_0, u(\cdot))$ reaches the set $\cB_\varepsilon(x^\ast)$ for some small $\varepsilon>0$ in minimum time units $\cT_{\rm conv}$ subject to the energy budget $\|u\|_\Lone \le \cE_{\rm max}$.

Our formulation based on the isostables is guided by our use of Koopman operator framework for computational purposes. However, there are other benefits in this formulation. One can view $\cB_\varepsilon(x^\ast)$ as a ball in the (contracting) pseudometric $d_K(x,y) = |s_1(x) - s_1(y)|$ on a basin of attraction $\cB(x^\ast)$  ($d_K(x, y)$ is a pseudometric, since $d_K(x, y)$ can be equal to zero for some $x\ne y$). By reformulating the standard convergence problem using a pseudometric defined through Koopman eigenfunctions, we take into account the dynamical properties of the unforced system. Furthermore, if $\varepsilon$ is close to zero then the solution of Problem 1 can be used to solve a convergence-type problem. For example, the problem of switching between equilibria, which is considered in~\cite{sootla2016nolcos, sootla2015pulsesaut}, falls into this category. 

The main challenge in solving this problem is the parametrization of the control signal. Most of the control methods (such as dynamic programming, Pontryagin's maximum principle) are not tailored to deal with time parametrized control signals since they rely on the semigroup property of the value function or the dual variable. Hence it is not entirely clear how to systematically approach this problem through these methods.

\subsection{A Solution using a Static Optimization Program}
The computational solution to our problem will be established by computing first a static function, which we call \emph{the pulse control function} and define below. 

\begin{defn} \label{def:control-smth-function} Let the function $r:\cD \times \Rnn\times \Rnn \to \C\bigcup\{\infty\}$ such that 
	\begin{gather*}
	r(x, \mu,\tau) = s_1(\phi(\tau, x, \mu)),
	\end{gather*}
	where $s_1$ is a dominant eigenfunction on the basin of attraction of $x^\ast$, be called \emph{the pulse control function}. By convention $r(x, \mu, \tau) = \infty$, if $\phi(\tau, x, \mu) \not \in \cB(x^\ast)$
\end{defn}

If $s_1$ is real-valued and increasing on $\dom(s_1) = \cB(x^\ast)$, then we assume it is extended to $\R^n$ so that $s_1:\R^n\to \R\cup \{-\infty, +\infty\}$ is increasing on $\R^n$. We note that the switching function proposed in~\cite{sootla2016nolcos} corresponds to $r(x^\ast, \cdot,\cdot)$. The pulse control function $r$ can be used in a context broader than the switching function by~\cite{sootla2016nolcos}, which we demonstrate in this paper. In particular, we will solve Problem~1 using the following result, which we will prove in what follows.

\begin{thm}
\label{thm:main_result}
	Consider the system~\eqref{sys:f}, \emph{Problem 1} under Assumptions~{A1--A6} and the optimization program:
\begin{align}
\label{prog:static}\gamma^\ast= \min\limits_{\mu\ge 0, \tau\ge 0}~~ 
&\frac{1}{|\lambda_1|} ~~\ln\left(|r(x, \mu,\tau)|\right) + \tau, \\
\label{con:time}\text{subject to:}~~&  r(x,\mu,\tau)\le -\varepsilon, \\
\label{con:energy}                                 & \mu\cdot \tau \le \cE_{\rm max}.
\end{align}	
	If $s_1(x_0)\le-\varepsilon$, an optimal solution to~\eqref{prog:static} is an optimal solution to \emph{Problem 1}, if the former is feasible. Furthermore, the objective is nonincreasing in $\mu$ and $\tau$ and an optimal solution to~\eqref{prog:static}, if it exists, is achieved at the boundary of the admissible set to the constraint~\eqref{con:time} and/or the constraint~\eqref{con:energy}.
\end{thm}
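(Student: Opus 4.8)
The plan is to reduce Problem 1 to the static program \eqref{prog:static} by exploiting the key observation that, after the pulse of length $\tau$ and magnitude $\mu$ ends, the dynamics are those of the \emph{unforced} system, for which the isostable arrival time is given exactly by \eqref{time_isostable}. Concretely, I would split the motion of $\phi(t,x_0,u(\cdot))$ into two phases. During $[0,\tau]$ the flow obeys $\dot x = f(x,\mu)$ and arrives at the point $\phi(\tau,x_0,\mu)$, whose dominant eigenfunction value is precisely $r(x_0,\mu,\tau)$ by Definition~\ref{def:control-smth-function}. For $t>\tau$ the flow is that of the unforced system started from $\phi(\tau,x_0,\mu)$, so by \eqref{eq:property_eigenf} we have $s_1(\phi(t,x_0,u)) = r(x_0,\mu,\tau)\,e^{\lambda_1 (t-\tau)}$. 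Since $\lambda_1<0$ is real (Proposition~\ref{prop:mon-eig-fun}(i), via Assumption~A3), the condition $|s_1(\phi(t,x_0,u))|\le \varepsilon$ — i.e.\ reaching $\cB_\varepsilon(x^\ast)$ — is first met at
\[
t = \tau + \frac{1}{|\lambda_1|}\ln\!\left(\frac{|r(x_0,\mu,\tau)|}{\varepsilon}\right),
\]
\emph{provided} $|r(x_0,\mu,\tau)|\ge\varepsilon$; this is exactly the objective in \eqref{prog:static} up to the additive constant $\tfrac{1}{|\lambda_1|}\ln(1/\varepsilon)$, which does not affect the minimizer. Thus minimizing $\cT_{\rm conv}$ over admissible pulses is equivalent to minimizing the objective of \eqref{prog:static}, and the energy budget $\|u\|_{\Lone} = \mu\tau \le \cE_{\rm max}$ is precisely \eqref{con:energy}.

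The subtle point is the constraint \eqref{con:time}, $r(x_0,\mu,\tau)\le -\varepsilon$ (a one-sided, sign-specific condition rather than $|r|\ge\varepsilon$), and the requirement $s_1(x_0)\le -\varepsilon$. Here I would argue as follows. Since $s_1$ is real-valued and increasing with $\nabla s_1\gg 0$ (Assumptions~A3--A4), and since the equilibrium satisfies $s_1(x^\ast)=0$, the set $\cB_\varepsilon(x^\ast)$ is the sublevel/superlevel band $\{|s_1|\le\varepsilon\}$; the hypothesis $s_1(x_0)\le-\varepsilon$ says we start ``below'' this band. Along the unforced phase, $s_1(\phi(t,x_0,u)) = r(x_0,\mu,\tau)e^{\lambda_1(t-\tau)}$ is monotone in $t$ toward $0$ with constant sign equal to $\mathrm{sgn}(r(x_0,\mu,\tau))$; hence the flow enters $\cB_\varepsilon$ through the face $\{s_1=-\varepsilon\}$ exactly when $r(x_0,\mu,\tau)\le-\varepsilon$, and it would overshoot past the target band (exiting on the other side, $s_1>\varepsilon$) if instead $r(x_0,\mu,\tau)\ge\varepsilon$. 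So restricting to $r\le-\varepsilon$ captures all pulses that bring the state to the target without overshoot; combined with $s_1(x_0)\le-\varepsilon$ and the ordering properties, any overshooting pulse can be shown non-optimal or excluded by the problem geometry. I would also need to rule out the degenerate possibility $|r(x_0,\mu,\tau)|<\varepsilon$ with the wrong sign trajectory; monotonicity of $s_1$ along trajectories together with Assumption~A5 ($f(\cdot,\mu)$ increasing in $\mu$) and A4 handles this.

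For the final sentence — the objective is nonincreasing in $\mu$ and $\tau$ and optimality is attained on the boundary of \eqref{con:time} and/or \eqref{con:energy} — I would differentiate (or use monotone-comparison arguments) the map $(\mu,\tau)\mapsto \tfrac{1}{|\lambda_1|}\ln|r(x_0,\mu,\tau)| + \tau$. Increasing $\mu$ makes $f(x,\mu)$ larger in the order (A5), which by monotonicity (A3) pushes $\phi(\tau,x_0,\mu)$ up in the order, hence \emph{increases} $s_1$, i.e.\ makes $r(x_0,\mu,\tau)$ larger — but recall $r\le-\varepsilon<0$, so larger means \emph{smaller absolute value}, decreasing $\ln|r|$ and hence the objective. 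Increasing $\tau$ has two effects: the explicit $+\tau$ term and the change in $r$; one shows via \eqref{eq:property_eigenf2} and the chain rule that $\frac{d}{d\tau}\ln|r(x_0,\mu,\tau)|$ along the forced flow is dominated so that the net derivative is $\le 0$ (intuitively: extending the pulse drives the state further toward — and past — $x^\ast$ at least as fast as the unforced rate). Consequently the unconstrained minimum is at ``$\mu,\tau$ as large as possible,'' so any interior optimum is impossible and the minimizer must saturate \eqref{con:energy} (the only upper bound on $\mu\tau$) or lie on the boundary $r=-\varepsilon$ of \eqref{con:time} (beyond which feasibility is lost).

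The main obstacle I anticipate is not the two-phase decomposition — that is essentially the defining property of Koopman eigenfunctions — but rather the careful bookkeeping of the \emph{sign condition} in \eqref{con:time}: showing rigorously that insisting $r\le-\varepsilon$ (as opposed to $|r|\ge\varepsilon$) loses no optimal solutions, and that the ``first hitting time'' of $\cB_\varepsilon$ is genuinely given by the closed-form expression rather than some earlier transversal crossing. This requires using monotonicity of $s_1$ \emph{along} unforced trajectories (which follows since $\frac{d}{dt}s_1(\phi(t,x,0)) = \lambda_1 s_1(\phi(t,x,0))$ keeps a constant sign) together with the standing assumption $s_1(x_0)\le-\varepsilon$, and arguing that overshooting the band is never time-optimal. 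The monotonicity-in-$(\mu,\tau)$ claim will also need Assumptions~A4--A5 in an essential way, and I would flag explicitly where each is used.
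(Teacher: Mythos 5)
Your proposal is correct and follows essentially the same route as the paper: the two-phase (pulse, then free flow) decomposition identifies the objective of \eqref{prog:static} with the convergence time via \eqref{time_isostable}, and the derivative facts you invoke for $r$ --- $\partial_\mu r>0$ and $\partial_\tau r>\lambda_1 r$, obtained from Assumptions A3--A5 and \eqref{eq:property_eigenf2} --- are exactly the content of Lemma~\ref{lem:r-prop}(i), which the paper combines with $r<0$ to show the objective is decreasing in $\mu$ and $\tau$ and hence that the optimum saturates constraint \eqref{con:time} and/or \eqref{con:energy}. If anything, you are more careful than the paper on the first part (the paper dismisses the equivalence with Problem~1, including the sign bookkeeping in \eqref{con:time}, as ``straightforward to verify''), while your $\tau$-derivative step is a sketch of the computation the paper carries out explicitly.
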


If Assumptions~A5 and~A6 do not hold, then we can possibly have multiple minima including the points which do not activate the constraints. However, we can still compute a minimizing solution using the same program. Intuitively, the objective function is the convergence time (in fact, $\cT_{\rm conv}=\gamma^\ast-1/|\lambda_1| \ln(\varepsilon)$), the constraint on $r(x, \mu, \tau) \le - \varepsilon$ ensures that we stop when reaching $\cB_\varepsilon$ and the constraint $\mu\cdot \tau \le \cE_{\rm max}$ is the energy budget. Since the optimum is attained when one of the constraints is active, the optimization program can be solved by a line search over $\mu$ (or $\tau$) over the constraints curves, provided that $r$ can be estimated at any given point. In our simulations we compute the function $r$ for specific pairs of values $(\mu,\tau)$ and take the minimum over these pairs. The two terms in the objective function of the static optimization problem show the tradeoff on the choice of the intermediate target isostable (which is to be reached after a time $\tau$). For instance, choosing an isostable close to the equilibrium can lead to a large pulse duration $\tau$ (second term), but a small convergence time of the free motion (first term). Furthermore, the function $r$ also allows to understand the tradeoff between the energy spent and the convergence time, which is not straightforward using standard optimal control theory. To summarize, we derived a \emph{static optimization problem}, which has the same solution as the \emph{dynamic optimization problem} (Problem 1). In order to compute the solution, one needs an efficient computational procedure for evaluating $r$ at a given point. 
\subsection{Computation of the Pulse Control Function}
The eigenfunction $s_1$ can be estimated through Laplace averages~\eqref{laplace-average} and the function $r$ is subsequently obtained since it is the composition of the eigenfunction $s_1$ with the flow. In particular, we can derive the following formula:
\begin{multline}
r(x,\mu,\tau)  = \lim\limits_{\bar{t}\rightarrow\infty} \frac{1}{\bar{t}} \int\limits_0^{\bar{t}} g\circ \phi(t, \phi(\tau,x, \mu),0) e^{-\lambda_1 t}d t \\
= \lim \limits_{\bar{t}\rightarrow\infty} \frac{1}{\bar{t}}\int\limits_\tau^{\bar{t}} g\circ \phi(t, x, \mu h(\cdot,\tau)) e^{-\lambda_1 (t-\tau)}d t ,
\label{eigen-fun-cont}
\end{multline}
where $\lambda_1$ is the dominant Koopman eigenvalue, $g\in C^1$ satisfies $g(x^\ast)=0$, $v_1^T \nabla g(x^\ast) \neq 0$, $v_1$ is the right eigenvector of $J(x^\ast)$ corresponding to $\lambda_1$ and $h(t,\tau)$ is the step function defined in~\eqref{eq:pulse}. In practice, we choose $g(x)= w_1^T (x- x^\ast)$, where $w_1$ is the dominant left eigenvector of $J(x^\ast)$. Since $\lambda_1$ is real according to Assumption A3 and Proposition~\ref{prop:mon-eig-fun}), we have 
\begin{gather*}
\begin{split}
r(x, \mu, \tau)  & = \lim_{\bar{t} \rightarrow \infty} w_1^T(\phi(\bar{t}, x, \mu h(\cdot,\tau)) - x^\ast) e^{-\lambda_1 (\bar{t}-\tau)} \\
& \approx w_1^T(\phi(\bar{t}, x, \mu h(\cdot,\tau)) - x^\ast) e^{-\lambda_1 (\bar{t}-\tau)}
\end{split}
\end{gather*}
where the time $\bar{t}$ should be chosen large enough.  In this case the tolerance of the differential equation solver should be set to $O(e^{\lambda_1 (\bar{t}-\tau)})$. When only observed data are available, the eigenfunction --- and therefore the function $r$ --- can be computed through dynamic mode decomposition methods (cf.~\cite{Schmid2010,Tu2014}). This idea is illustrated in Appendix~\ref{app:DMD}.

\subsection{Is the Control Space Rich Enough?} \label{s:rich_control}
Throughout the paper, we assume that the problem has a solution in the form of a temporal pulse $u(t) = \mu h(t, \tau)$. We will argue that in the case of monotone systems, this is not a restrictive assumption. First of all, if the system is globally asymptotically stable, then clearly we can converge to $x^\ast$ by using a temporal pulse.

Assume now that the system is monotone with two exponentially stable equilibria $x^\ast$ and $x^\bullet$ and basins of attraction $\cB(x^\ast)$ and $\cB(x^\bullet)$, respectively. Let the system be defined on a forward-invariant set $\cD = \cB(x^\ast)\bigcup\cB(x^\bullet)$. Assume also that $x^\ast \gg x^\bullet$, which is typically fulfilled in many bistable monotone systems. If $x_0\in \cB(x^\ast)$, then we can choose $u = 0$, which is a temporal pulse with $\tau =0$. Consider now the case $x_0 = x^\bullet$. If there exists a control signal $u^1\in \cU_\infty$ driving the system from $x^\bullet$ to $x^\ast$, then we have $\phi(t, x^\bullet, u^1) \preceq \phi(t, x^\bullet, \mu)$, where $u^1(t)\le \mu$ for (almost) all $t$. At a time $\tau$, the flow $\phi(\tau, x^\bullet, u^1)$ will be in the vicinity of $x^\ast$ and in the basin of attraction of $\cB(x^\ast)$. The flow $\phi(\tau, x^\bullet, \mu)$ will also be in the basin of attraction of $x^\ast$. Indeed, if $\phi(\tau, x^\bullet, \mu) \in \cB(x^\bullet)$, then $[x^\bullet, \phi(\tau, x^\bullet, \mu)]\in \cB(x^\bullet)$, which contradicts that $\phi(\tau, x^\bullet, u^1) \in \cB(x^\ast)$ and $\phi(\tau, x^\bullet, u^1) \in[x^\bullet, \phi(\tau, x^\bullet, \mu)]$ (cf.~\cite{sootla2016basins}). Hence if we can switch from $x^\bullet$ to $x^\ast$ with a control signal $u(t)$, then we can switch with a temporal pulse. Finally the case $x_0 \in \cB(x^\bullet)$ is treated in a similar manner by first allowing the trajectory to converge to a neighborhood $x^\bullet$ with $u^2 =0$ and then applying the argument above.

This discussion shows that using temporal pulses in the case of monotone systems does not restrict the space of feasible problems. However, we can strengthen the argument by showing that \emph{constant controls are optimal} in the absence of energy constraints in Appendix~\ref{app:const-control}.

\section{Proof of the Main Result} \label{s:proof}
In order to prove the main result, we would need to use the properties of the pulse control function $r$, which we present in the following lemma.

\begin{lem}\label{lem:r-prop}
Let the system~\eqref{sys:f} satisfy Assumptions~{A1--A5}. Then $r$ is a $C^1$ function on its effective domain $\dom(r)$. Furthermore, for all $(x,\mu,\tau)\in\dom(r)$\\
(i)  $\partial_x    r(x,\mu,\tau) \gg 0$,   $\partial_\mu r(x,\mu,\tau) > 0$  and  $\partial_\tau r(x,\mu,\tau) > \lambda_1 r(x,\mu,\tau)$\\
(ii) If $r(x,\mu,\tau)\le 0$, then $\partial_\tau r(x,\mu,\tau)> 0$;\\
(iii) If $f(x, \nu)\succeq 0$, then $\partial_\tau r(x,\mu,\tau)>0$ for all finite $\tau> 0$ and $\mu > \nu$.
\end{lem}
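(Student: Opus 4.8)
The plan is to establish regularity first, then the monotonicity/growth inequalities by writing down variational equations for the flow and exploiting the monotonicity assumptions A3--A5 together with the eigenfunction property A4. For the $C^1$ claim: $r(x,\mu,\tau)=s_1(\phi(\tau,x,\mu))$ is a composition of $s_1$ (which is $C^1$ by A1--A2 and the remarks in Section~\ref{s:prel}) with the flow map $(\tau,x,\mu)\mapsto\phi(\tau,x,\mu)$; the latter is $C^1$ in all arguments because $f$ is $C^2$ (A1) and the pulse input $\mu h(\cdot,\tau)$ depends smoothly on $\mu$ on $[0,\tau]$, so standard smooth-dependence-on-data theorems apply. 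One subtlety I would address: on $\dom(r)$ the endpoint $\phi(\tau,x,\mu)$ lies in $\cB(x^\ast)=\dom(s_1)$, which is open, so the composition is $C^1$ on the (open) effective domain.

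For part (i), I would differentiate $r$ through the chain rule. Write $y=\phi(\tau,x,\mu)$. Then $\partial_x r=\nabla s_1(y)^T\,\partial_x\phi(\tau,x,\mu)$. The sensitivity matrix $\Phi(\tau):=\partial_x\phi(\tau,x,\mu)$ solves the variational equation $\dot\Phi=\partial_x f(\phi,\mu)\Phi$, $\Phi(0)=I$; since A3 gives monotonicity of the forced system, $\partial_x f(\cdot,\mu)$ is Metzler, hence $\Phi(\tau)$ has nonnegative entries, and by irreducibility-type arguments (or strong positivity coming from A4) one gets $\Phi(\tau)\succ 0$ in the appropriate sense; combined with $\nabla s_1(y)\gg 0$ (A4) this yields $\partial_x r\gg 0$. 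For $\partial_\mu r$: $\partial_\mu\phi$ solves $\dot w=\partial_x f(\phi,\mu)w+\partial_\mu f(\phi,\mu)$, $w(0)=0$, and $\partial_\mu f\succeq 0$ with strict inequality somewhere by A5, so $w(\tau)\succ 0$ (variation-of-constants plus the nonnegativity of the transition matrix), giving $\partial_\mu r=\nabla s_1(y)^T w(\tau)>0$. For the $\tau$-derivative I would use that along the pulse the flow is generated by $f(\cdot,\mu)$, so $\partial_\tau\phi(\tau,x,\mu)=f(y,\mu)$, hence $\partial_\tau r=\nabla s_1(y)^T f(y,\mu)$. Now compare with the unforced eigenfunction identity~\eqref{eq:property_eigenf2}: $\nabla s_1(y)^T f(y,0)=\lambda_1 s_1(y)=\lambda_1 r$. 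Subtracting, $\partial_\tau r-\lambda_1 r=\nabla s_1(y)^T\big(f(y,\mu)-f(y,0)\big)$, and by A5 the bracket is $\succ 0$ while $\nabla s_1(y)\gg 0$, so $\partial_\tau r>\lambda_1 r$.

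Parts (ii) and (iii) then follow quickly from the formula $\partial_\tau r=\lambda_1 r+\nabla s_1(y)^T(f(y,\mu)-f(y,0))$ derived above. For (ii): if $r\le 0$ then $\lambda_1 r\ge 0$ (recall $\lambda_1<0$ by A3 and Proposition~\ref{prop:mon-eig-fun}), and the second term is strictly positive, so $\partial_\tau r>0$. For (iii): write $\partial_\tau r=\nabla s_1(y)^T f(y,\mu)$ directly; since $f(x,\nu)\succeq 0$ and, by A5, $f(y,\mu)\succeq f(y,\nu)$ whenever the flow keeps the relevant state-dependence consistent — more carefully, one should note $f(x,\mu)\succ f(x,\nu)\succeq 0$ pointwise for $\mu>\nu$ by A5, and this is being evaluated at $y=\phi(\tau,x,\mu)$ — we get $f(y,\mu)\succ 0$, hence $\partial_\tau r=\nabla s_1(y)^T f(y,\mu)>0$ for all finite $\tau>0$.

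The main obstacle I anticipate is making the strict positivity statements ($\partial_x r\gg 0$ rather than just $\succeq 0$, and the strict inequalities in $\partial_\mu r$, $\partial_\tau r$) fully rigorous: this requires upgrading the nonnegativity of the variational transition matrix to genuine positivity, which needs either an irreducibility hypothesis on $\partial_x f$ or, more in the spirit of this paper, leaning on A4 ($\nabla s_1\gg0$) to absorb the strictness — i.e. even if $\Phi(\tau)$ only has some positive entries, pairing with a strictly positive gradient can still give a strict inequality, but one must check no cancellation occurs. I would handle this by working with the quantity $\nabla s_1(\phi(t,x,\mu))^T$ propagated backward along the flow and showing it stays $\gg 0$, which is cleaner than analyzing $\Phi(\tau)$ entrywise. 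The rest is routine ODE sensitivity analysis.
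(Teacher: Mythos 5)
Your treatment of the regularity claim, of part (i), and of part (ii) is sound and follows essentially the same route as the paper: the key identity $\partial_\tau r - \lambda_1 r = \nabla s_1(y)^T\bigl(f(y,\mu)-f(y,0)\bigr) > 0$ with $y=\phi(\tau,x,\mu)$, obtained from the chain rule and \eqref{eq:property_eigenf2}, is exactly the paper's argument, and (ii) then follows from $\lambda_1<0$. For $\partial_x r$ and $\partial_\mu r$ you work through the variational equations where the paper instead compares flows directly ($x\succ y$ or $\mu_1>\mu_2$ implies a strict order of the endpoints, via a contradiction argument using A5 at the first touching time); these are equivalent in substance, and you correctly flag the strictness subtlety that both routes share (a strictly increasing function need not have a strictly positive derivative — the paper is equally quick here, so I will not press the point).

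Part (iii), however, contains a genuine gap. You write $\partial_\tau r=\nabla s_1(y)^T f(y,\mu)$ and then claim $f(y,\mu)\succ 0$ at $y=\phi(\tau,x,\mu)$ from ``$f(x,\mu)\succ f(x,\nu)\succeq 0$ pointwise.'' But the hypothesis $f(x,\nu)\succeq 0$ is given only at the \emph{initial} point $x$, whereas you need positivity of the vector field at the \emph{current} point $y$, which may be far from $x$. Assumption A5 gives $f(z,\mu)\succ f(z,\nu)$ at every $z$, but $f(z,\nu)\succeq 0$ is not known at $z=y$, so $f(y,\mu)\succ 0$ does not follow; your parenthetical ``whenever the flow keeps the relevant state-dependence consistent'' is precisely the missing step, not a justification of it. What is needed is the classical monotone-systems fact that if the initial velocity satisfies $f(x,\mu)\succeq 0$, then $t\mapsto\phi(t,x,\mu)$ is nondecreasing, hence $f(\phi(\tau,x,\mu),\mu)=\partial_\tau\phi(\tau,x,\mu)\succeq 0$ for all $\tau$. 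The paper proves this by perturbing the system to $\dot z=f(z,\mu)+\tfrac{1}{n}\bfone$, showing $\phi_n(t,x,\mu)\gg\phi_n(\xi,x,\mu)$ for $t>\xi$, and letting $n\to\infty$; it then upgrades $\succeq$ to $\succ$ by noting that $\partial_\tau\phi(\tau,x,\mu)=0$ would make $y$ an equilibrium of $\dot x=f(x,\mu)$ reached in finite time, contradicting uniqueness of solutions. Both of these steps — the forward-invariance of $\{f(\cdot,\mu)\succeq 0\}$ along the trajectory and the strictness argument — are absent from your proposal and must be supplied for (iii) to hold.
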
 
\begin{proof}
(o) First, we show that under the assumptions above for all  $t >0$ and $\mu_1 > \mu_2$, we have
\begin{gather}
\label{eq1:a6-prop}	\phi(t, x, \mu_1) \succ 	\phi(t, x, \mu_2), \\
\label{eq2:a6-prop}	s_1(\phi(t, x, \mu_1)) > s_1(\phi(t, x, \mu_2)).
\end{gather}
Due to monotonicity, we have that $\phi(t,x, \mu_1) \succeq \phi(t,x, \mu_2)$ for all  $t >0$  and  $\mu_1 > \mu_2$. All we need to show is that $\phi(t,x, \mu_1) \ne \phi(t,x, \mu_2)$ for all finite $t>0$. At $t=0$, the time derivatives of the flow are equal to $f(x,\mu_1)$ and $f(x,\mu_2)$. Since $f(x,\mu_1) \succ f(x,\mu_2)$ (Assumption~{A5}), there exists a $T>0$ such that $\phi(t,x, \mu_1) \succ \phi(t,x, \mu_2)$ for all $t<T$. If for some $T$ we have that $\phi(T,x, \mu_1) = \phi(T,x, \mu_2)$ and $\phi(t,x, \mu_1) \succ \phi(t, x, \mu_2)$ for all $t <T$, then for some index $i$ we have
\[
\frac{d \phi_i(t,x, \mu_1)}{d t}\Bigl|_{t =T} <  \frac{d \phi_i(t,x, \mu_2)}{d t}\Bigl|_{t =T}.
\]
This implies that $f_i(\phi(T, x, \mu_1),\mu_1) < f_i(\phi(T, x, \mu_2), \mu_2)$, which together with $\phi(T, x, \mu_1) = \phi(T,x, \mu_2)$ contradicts Assumption~A5. Therefore, $\phi(t,x, \mu_1) \succ \phi(t,x, \mu_2)$ for all finite $t> 0$. Due to Assumption~{A4} we have that $\nabla s_1(x )\gg 0$, which in particular means that $s_1(x) > s_1(y)$ for all $x\succ y$, and~\eqref{eq2:a6-prop} follows from~\eqref{eq1:a6-prop}.

(i)  The flow is continuously-differentiable for constant control signals since $f(x, u)\in C^2$ (Assumption~A1), and hence $r(x,\mu,\tau) = s_1(\phi(\tau,x,\mu))$ is a $C^1$ function. \\
For $x \succ y$, where  $(x,\mu,\tau)$ and $(y,\mu,\tau)\in\dom(r)$, we have $s_1(\phi(\tau, x,\mu)) > s_1(\phi(\tau, y, \mu))$ due to monotonicity (Assumption~{A3}) and Assumption~A4. Hence $\nabla_x r(x, \mu,\tau)\gg 0$.\\
For $\mu > \nu$, where  $(x,\mu,\tau)$ and $(x, \nu,\tau)\in\dom(r)$, we have $s_1(\phi(\tau, x, \mu)) > s_1(\phi(\tau, x, \nu))$ due to monotonicity (Assumption~{A3}) and point (o). Hence $\partial_\mu r(x, \mu,\tau)> 0$. \\
Finally,  $\partial_\tau r(x, \mu,\tau)> \lambda_1 r(x,\mu, \tau)$ follows from:
\begin{multline*} 
\partial_\tau r(x,\mu, \tau)=\dfrac{d s_1(\phi(t,x,\mu))}{dt}\Bigl|_{t =\tau} = \\
\nabla s_1(\phi( \tau, x,\mu))^T f(\phi(\tau, x,\mu), \mu) > \\
\nabla s_1(\phi( \tau, x,\mu))^T f(\phi( \tau, x,\mu), 0) = \\
\lambda_1 s_1(\phi(\tau, x, \mu))=\lambda_1 r(x,\mu, \tau),
\end{multline*}
where the inequality is due to Assumption~A4 and A5, and the following equality is due to \eqref{eq:property_eigenf2}. 

(ii) This follows directly from point (i). 

(iii) This proof employs a fairly standard technique in monotone system theory. First note that $f(x,\nu)\succeq 0$ and Assumption~A5 imply that $f(x,\mu)\succ 0$. Consider a perturbed system $\dot z = f(z,\mu) +1/n \bfone$, where $\bfone$ is a vector of ones and $n$ is a positive integer. Let the flow of this system be $\phi_n(t,x,\mu)$. We have that $f(x,\mu) +1/n \bfone\gg f(x,\mu)\succ 0$. 	Since this is the derivative of the flow with respect to time around $t=0$, then $\phi_n(\delta, x, \mu)\gg x$ for a sufficiently small positive $\delta$. Now for $t > \delta$, we have
\begin{gather*}
\phi_n(t, x, \mu)  = \phi_n(t-\delta, \phi_n(\delta, x, \mu), \mu) \gg \phi_n(t-\delta, x, \mu),
\end{gather*}		
and hence $\phi_n(t, x, \mu)  \gg \phi_n(\xi, x, \mu)$	for any $t>\xi$. This conclusion holds for all $n>0$. With $n\rightarrow\infty$, we have that $\phi_n(t,x,\mu)\rightarrow \phi(t,x,\mu)$. Therefore, $\phi(t,x, \mu) \succeq \phi(\xi, x, \mu)$ for all finite $t>\xi$ and $\partial_\tau \phi(\tau, x, \mu) \succeq 0$.  If the equality $\partial_\tau \phi(\tau, x, \mu) = 0$ is attained, then $\phi(\tau, x, \mu)$ is an equilibrium of the system $\dot x = f(x, \mu)$. This is impossible since an exponentially stable equilibrium cannot be reached in finite time $\tau$ due to uniqueness of solutions. Hence we have $\phi(t,x, \mu) \succ \phi(\xi, x, \mu)$ for all finite $t>\xi$ and, using Assumption~A4, we obtain $s_1(\phi(t,x,\mu)) > s_1(\phi(\xi, x, \mu))$ for all finite $t>\xi$. The result follows.
\end{proof}

If Assumptions~A4 and~A5 do not hold, then all the inequalities in Lemma~\ref{lem:r-prop} are not strict. For instance, we have that  $\partial_x r(x,\mu,\tau) \succ 0$, $\partial_\mu r(x,\mu,\tau) \ge 0$  and $\partial_\tau r(x,\mu,\tau) \ge \lambda_1 r(x,\mu,\tau)$ in point (i). We present additional properties of the function $r$ in Appendix~\ref{app:additional-res}. Now we can present the proof of our main result.

\begin{proof-of}{Theorem~\ref{thm:main_result}}
It is straightforward to verify that all the constraints and optimization objective are the same for \emph{Problem~1} and problem~\eqref{prog:static}. Hence, by construction the first part of the statement is fulfilled.\\
According to the constraint~\eqref{con:time}, we have that $r(x_0,\mu,\tau)<0$, which implies the following chain of inequalities
\begin{multline*}
\partial_\tau (\ln(|r(x_0,\mu,\tau)| e^{|\lambda_1| \tau})) = 
\frac{\partial_\tau (|r(x_0,\mu,\tau)| e^{|\lambda_1| \tau})}{|r(x_0,\mu,\tau)| e^{|\lambda_1| \tau}} = \\
\frac{-\partial_\tau (r(x_0,\mu,\tau)) \cdot  e^{|\lambda_1| \tau} +  |\lambda_1|  |r(x_0,\mu,\tau)| e^{|\lambda_1| \tau} }{|r(x_0,\mu,\tau)| e^{|\lambda_1| \tau}} < \\\frac{\lambda_1  |r(x_0,\mu,\tau)| e^{|\lambda_1| \tau} +  |\lambda_1|  |r(x_0,\mu,\tau)| e^{|\lambda_1| \tau}}{|r(x_0,\mu,\tau)| e^{|\lambda_1| \tau}} = 0
\end{multline*}
where the inequality follows from Lemma~\ref{lem:r-prop}. Hence, the derivative of the objective function in~\eqref{prog:static} with respect to $\tau$ is negative. Finally, $\partial_\mu \ln(|r(x_0,\mu,\tau)| e^{|\lambda_1| \tau})$ is also negative according to Lemma~\ref{lem:r-prop}. Hence, if there is a feasible point, the constraints~\eqref{con:time}, \eqref{con:energy} are reached in order to minimize the objective. 
\end{proof-of}

\begin{figure*}[h]
	\centering
	\includegraphics[height = 0.5\columnwidth]{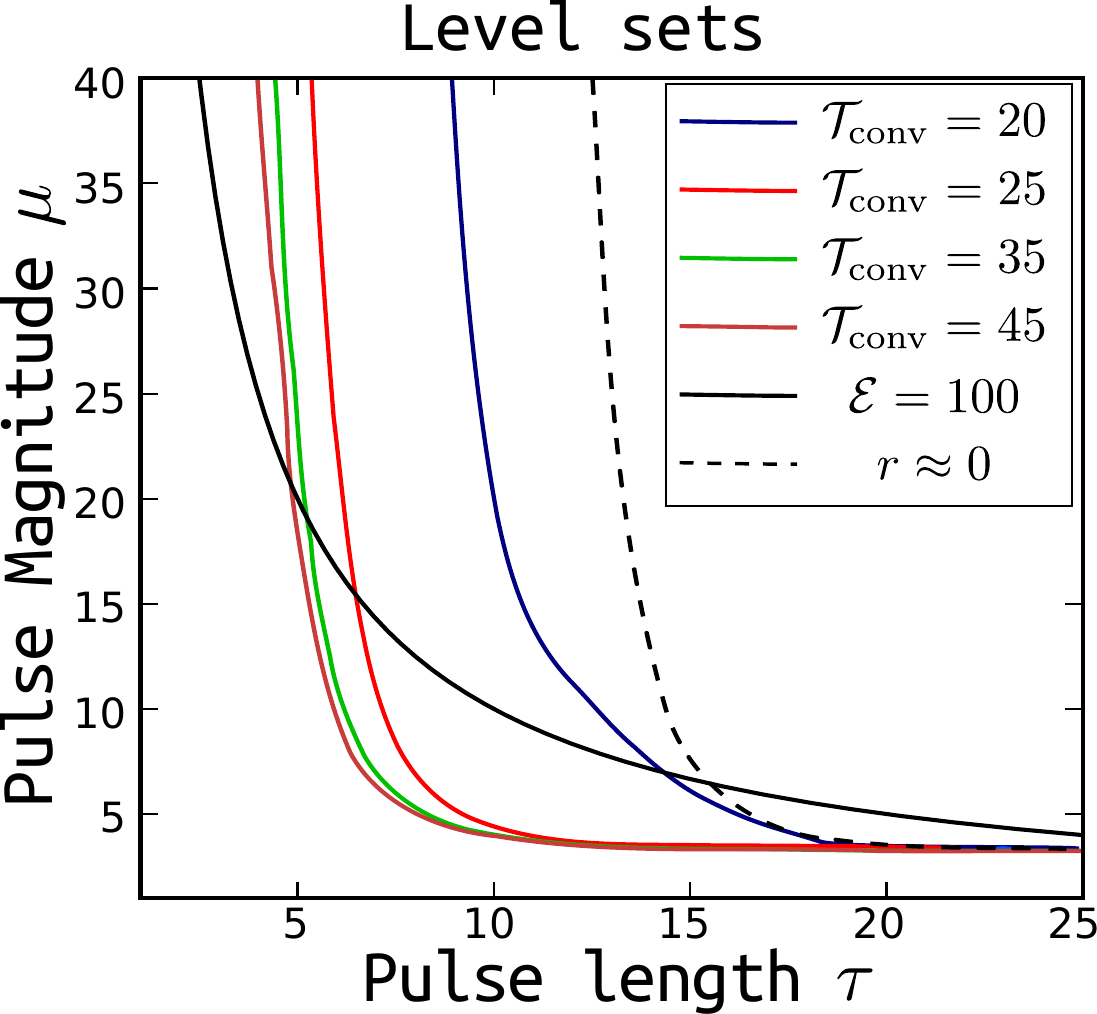}\hskip 10pt\includegraphics[height = 0.5\columnwidth]{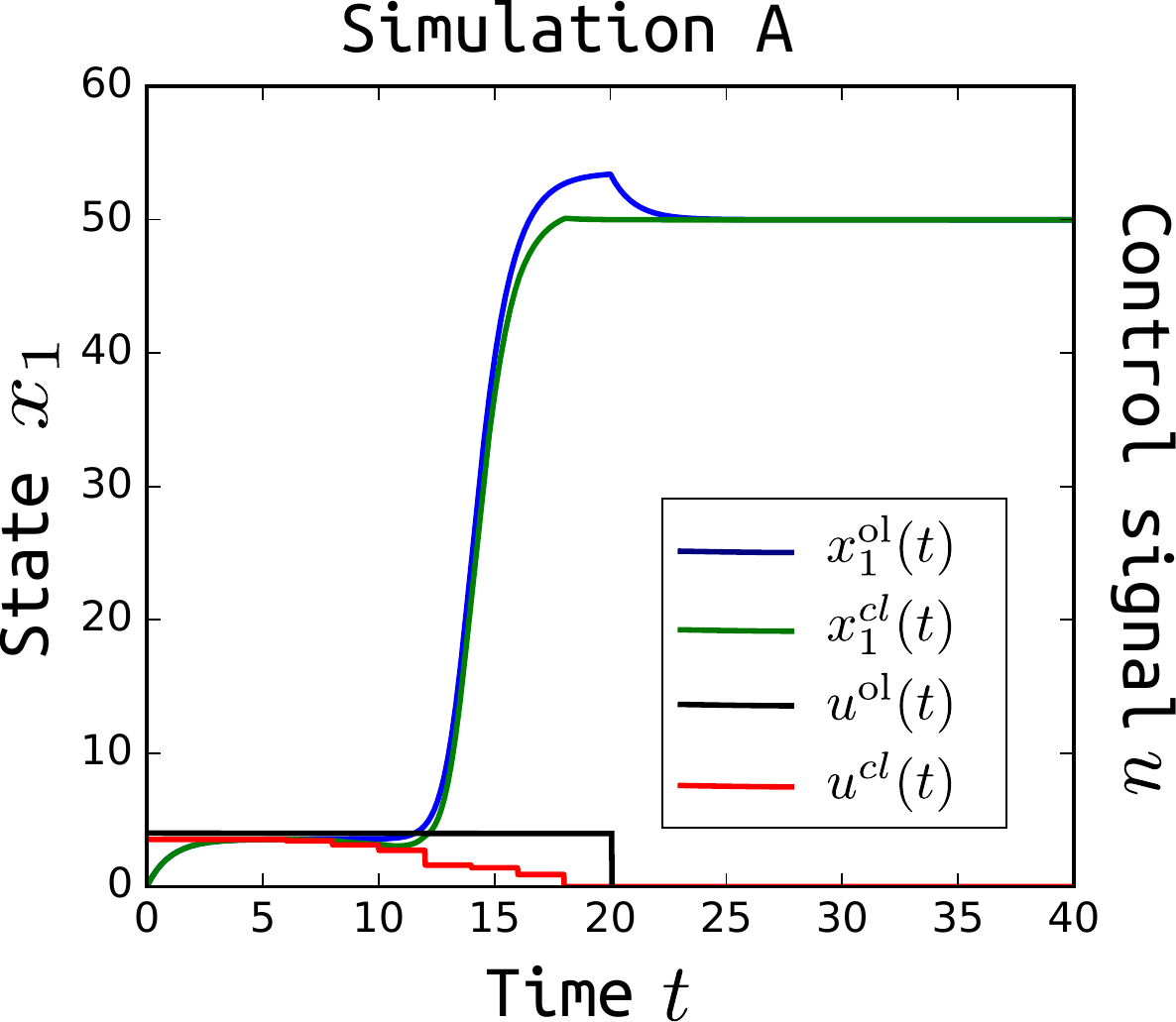}\hskip 10pt \includegraphics[height = 0.5\columnwidth]{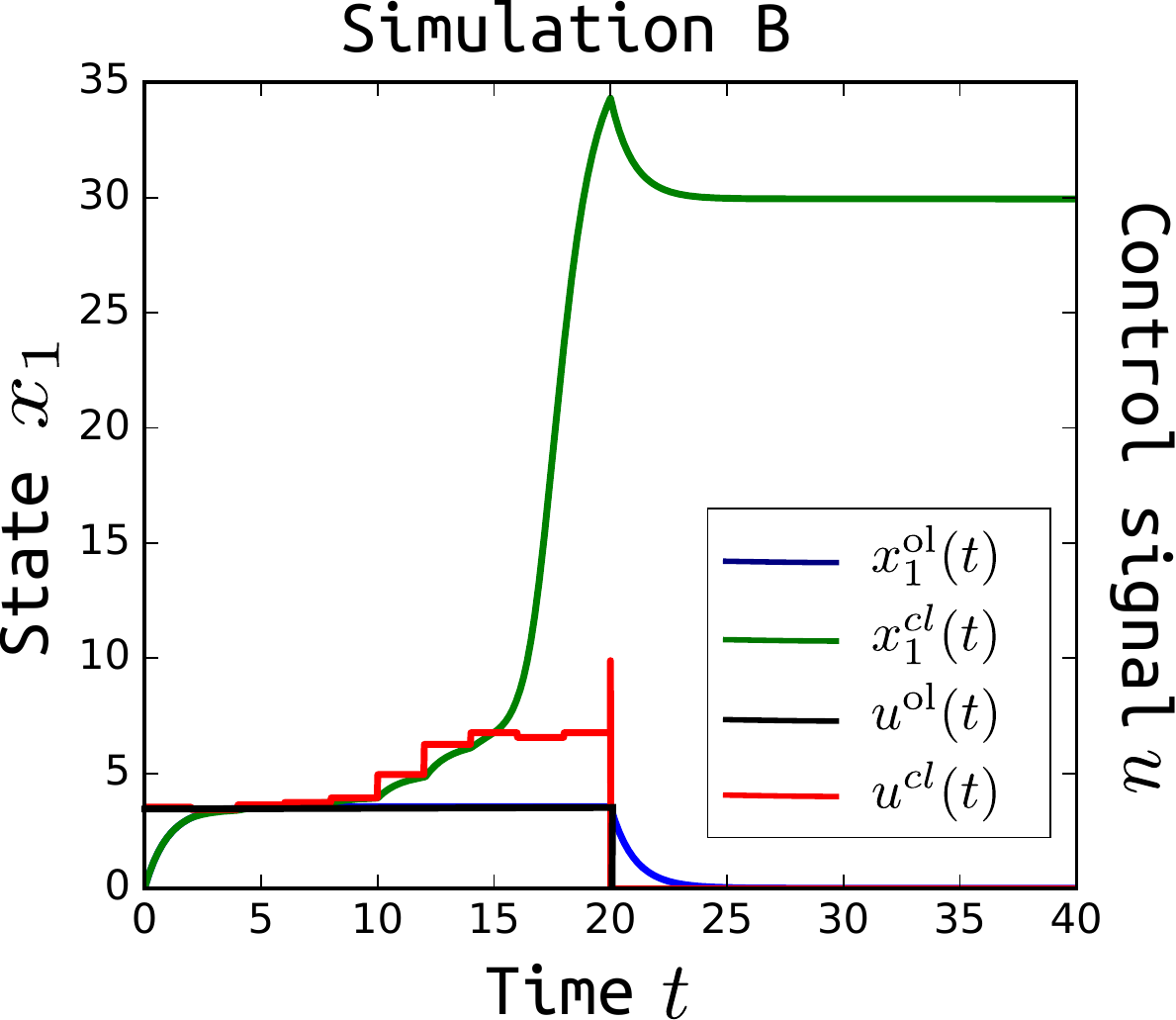}
	\caption{Closed-loop and open-loop switching. Left panel: level sets of $\cT_{\rm conv}$ with $\varepsilon = 10^{-2}$, level set $r = 0$, and energy budget curve $\mu \tau = \cE_{\rm max}$. Center and right panel: open and closed-loop simulations for Setting A and B, respectively. In both figures, $x_1^{\rm ol}$, $x_1^{\rm cl}$ stand for the trajectories of the state $x_1$ in the open and closed-loop settings, respectively, and $u^{\rm ol}$, $u^{\rm cl}$ stand for the corresponding control signals.}
	\label{fig:gr8-cl}
\end{figure*}

\section{Applications}\label{s:examples}
\subsection{Implementation Details}
We implemented our computational procedures both in python (using lsoda ordinary differential equation (ODE) solver) and Matlab (using ode15s ODE solver). We run our computational algorithm on a laptop equipped with a 4 core Intel i7 processor running at 2.4 GHz and 8 GB of RAM, however, we did not explicitly parallelize the computations. Computing one value of the function $r$ is equivalent to computing one trajectory of the system, albeit with high precision (we set relative tolerance of the solvers to $e^{-10}$ -- $e^{-14}$). 

\subsection{Closed-Loop Switching in Generalized Repressilator}
The eight species generalized repressilator is an academic example, where each of the species represses another species in a ring topology (cf.~\cite{Strelkowa10}). The corresponding dynamic equations for a symmetric generalized repressilator are as follows: 
\begin{align}
\label{eq:gr}\dot x_1 &= \frac{p_{1 1}^0}{1 + (x_{8}/p_{1 2}^0)^{p_{1 3}^0}} + p_{1 4}^0 - p_{1 5}^0 x_1 + u, \\
\notag\dot x_i &= \frac{p_{i 1}^0}{1 + (x_{i-1}/p_{i 2}^0)^{p_{i 3}^0}} + p_{i 4}^0 - p_{i 5}^0 x_i,~\forall i = 2,\dots 8,  
\end{align}
where $p_{i 1}^0 = 40$, $p_{i 2}^0 = 1$, $p_{i 3}^0 = 2$, $p_{i 4}^0 = 1$, and $p_{i 5}^0 = 1$. This system has two exponentially stable equilibria $x^\ast$ and $x^\bullet$ and is monotone with respect to the cones $\cK_x = P_x \R^8$ and $\cK_u = \R$, where $P_x= \diag([1,~-1,~1,~-1,~1,~-1,~1,~-1])$. We have also $x^\bullet \preceq_{\cK_x} x^\ast$. It can be shown that the unforced system is strongly monotone in the interior of $\Rnn^8$ for all positive parameter values. We consider here the problem of switching the system from one equilibrium $x^\bullet$ to the other equilibrium $x^\ast$ and we can verify that there exist pulse control signals that induce such a switch.

In the left panel of Figure~\ref{fig:gr8-cl}, we plot the level set of the function $\cT_{\rm conv}(x^\bullet, \mu,\tau, 10^{-2}) = \frac{1}{|\lambda_1|}\ln\left(\frac{r(x^\bullet, \mu, \tau)}{10^{-2}}\right) + \tau$, the level set $\cE_{\rm max} = 100$ of the function $\mu \tau = \cE_{\rm max}$, and the level set $r(x^\bullet,\mu,\tau) = 0$. The last two are related to the constraints of the static optimization program~\eqref{prog:static}. Note that $r$ is computed with the dominant eigenfunction associated with the target equilibrium $x^\ast$.  We also note that the function $\cT_{\rm conv}$ can escape to $-\infty$ around the level set $r(x^\bullet, \mu,\tau) \approx 0$. This is not a conflict with the interpretation of  the function $\cT_{\rm conv}$, since it represents the convergence time only if the value of $|r(x^\bullet,\mu,\tau)|$ is larger than $10^{-2}$. Otherwise the term $\frac{1}{|\lambda_1|}\ln\left(\frac{|r(x^\bullet, \mu, \tau)|}{10^{-2}}\right)$ is negative, and the computational results are meaningless. This also explains why the level sets of $\cT_{\rm conv}$ appear to have the same asymptotics as the level set $r(x^\bullet, \mu,\tau) \approx 0$ in Figure~\ref{fig:gr8-cl}. 

Our goal is to compare the open-loop (proposed by~\cite{sootla2015pulsesaut}) and closed-loop solutions to the switching problem subject to perturbations of parameters $p_{i j}^0$. We consider two settings for the simulation. In both settings, we compute the control signals based on the nominal model~\eqref{eq:gr} with the parameter values $p_{i j}^0$, but the simulations are obtained with two sets of (exact) parameter values:

\hspace{20pt}\emph{Setting A.}  We set $p_{i 1}^A = 50$ for odd $i$.

\hspace{20pt}\emph{Setting B.} We set $p_{i 1}^B = 30$ for odd $i$. 

The Euclidean distance between the nominal initial point and the actual initial point in Setting A and~B is equal to $0.025$ and $0.031$, respectively. In order to compute an open-loop optimal control policy based on the nominal model (i.e. with parameter values $p_{i j}^0$), one can solve the static optimization program~\eqref{prog:static}. The plots in Figure~\ref{fig:gr8-cl} also offer a graphical solution to the problem and a depiction of possible tradeoffs in the problem. In our case, the optimal solution lies at the intersection of the constraint curves (i.e. energy budget curve and level set $r(x^\bullet,\mu,\tau) = 0$). 

In our simulations, we pick a pair $(\mu^0, \tau^0)$ lying near the zero level set of $r$ below the level set $\cE_{\rm max} = 100$. This is not an optimal solution for the energy budget $\cE_{\rm max} = 100$, however, we pick a solution with a lower energy expenditure and a larger time $\tau$ (and hence larger convergence time) in order to have a possibility to react to the obtained measurements in the case of the closed-loop setting. We take $\tau^0 = 20$ and compute $\mu^0$ minimizing the time $\cT_{\rm conv}$ on a uniform grid of $100$ points in $[2,~10]$, which gives the value $\mu^0 = 3.53$.

For the closed-loop control, we take the same initial pair $(\tau^0, \mu^0)$.  In both setting~A and~B, we update the control signals every $t_{\rm samp} = 2$. For each update, we decrease the time of the pulse by $t_{\rm samp}$ and we decrease the available energy budget by subtracting the energy already consumed. We then compute the values of the function $r$ with a fixed $\tau = 20 - N t_{\rm samp}$, where $N$ is the number of previous updates and we choose the value $\mu$ on a uniform grid of $100$ points in $[2,~10]$, which minimizes $\cT_{\rm conv}$ with $r(x^\bullet, \mu, \tau) < 0$. 

The simulation results are depicted in the center and right panels of Figure~\eqref{fig:gr8-cl}. In Simulation A, the system converges to the target equilibrium faster than the nominal one (i.e, with parameters $p_{i j}^0$) and the closed-loop solution saves energy and limits the overshoot in comparison with the open-loop solution. In Simulation B, the opposite occurs and all the energy budget is spent. In this case, the closed-loop solution allows the switch, while the open-loop (i.e.~\cite{sootla2015pulsesaut}) does not. 

\subsection{Synchronization of Cardiac Cells}
\label{s:synchro}
Besides the switching problem, our approach can be used for more general problems. For example, a problem of importance in biology is to synchronize an ensemble of systems (e.g. cells) with a common control input. In this subsection, we propose to use the function $r$ in order to compute a train of temporal pulses and solve the synchronization problem.  Provided that the function $r$ and the state $x^j$ of each system are known, we design a pulse that drives every system toward isostables. The pair $(\mu^\ast,\tau^\ast)$ is chosen in such a way that the corresponding pulse minimizes the maximum time delay between the different systems, and it follows from~\eqref{time_isostable} that this pair can be computed as
\begin{equation*}
(\mu^\ast,\tau^\ast) = \argmin_{(\mu,\tau) \in \Rnn^2} \ln \left(\frac{\max_j r(x_j,\mu,\tau)}{\min_j r(x_j,\mu,\tau)}\right).
\end{equation*}
From a practical point of view, we note that the values of the function $r$ might be known only for some $(x,\mu,\tau)$ (especially if it is computed from data). In this case, the value $r$ at the different states $x_j$ of the systems can be computed by interpolation, and the optimal pair $(\mu^\ast,\tau^\ast)$ will be picked among the pairs $(\mu,\tau)$ for which the value of $r$ is known. 

\begin{algorithm}[t]
	\caption{Closed-loop synchronization of cells}
	\label{alg:synch}
	\begin{algorithmic}[1]
		\State {\bf Inputs:} $r$ for given $x$, $\tau$, $\mu$; time $T_p$ between two pulses; number of pulses $N_p$
		\For{$i= 1,\dots, N_p$}
		\State Observe the current states $x_j$ of the systems
		\State Compute the values of $r$ at $x_j$
		\State Find $(\mu^\ast,\tau^\ast)$
		\State Apply the pulse and wait (during $T_p-\tau$)
		\EndFor
	\end{algorithmic}
\end{algorithm}

We apply this closed-loop control (see Algorithm~\ref{alg:synch}) to synchronize FitzHugh-Nagumo systems (cf.~\cite{fitzhugh1961impulses,nagumo1962active}), which have been proposed as simple models of excitable cardiac cells. This example is motivated by the synchronization of cardiac cells (i.e. defibrillation) and is directly inspired by the study by~\cite{Wilson2014} proposing optimal defibrillation strategies. We consider here $100$ FitzHugh-Nagumo cells described by the dynamics (see also~\cite{Wilson2014})
\begin{align*}
\dot{V} & = 0.26 V(V-0.13)(1-V)-0.1 V w \\
\dot{w} & = 0.013 (V-w)
\end{align*}
where $V$ is the membrane potential and $w$ is a recovery (gating) variable. The values of $r$ were computed \emph{a priori} on a $20 \times 20$ grid for $(V,w) \in[0,2]\times [0,2]$ and on a $51 \times 41$ grid for $(\mu,\tau) \in [0,0.5] \times [10,50]$. The time between two successive pulses is $T_p=70$ and the initial conditions are randomly distributed on $[0,2]\times[0,2]$. The maximum time delay between the cells (computed with~\eqref{time_isostable}) after each pulse is shown in Figure~\ref{fig:pulse_synchro}(a) for the input obtained with the closed-loop control (optimal pairs $(\mu^\ast,\tau^\ast)$) and for periodic pulse trains. The best performance is obtained with the closed-loop control. As shown in Figure~\ref{fig:pulse_synchro}(b), the optimal pairs $(\mu^\ast,\tau^\ast)$ are not identical at each iteration, since they depend on the states of the cells, which motivates the use of closed-loop control. The first pulses correspond to a maximum value $\tau^*=50$, but $\mu^*$ takes intermediate values in the interval $[0,0.5]$. In particular, small values of $\mu$ are needed to obtain a fast convergence rate. We observe in Figure~\ref{fig:pulse_synchro}(a) that periodic pulse trains also synchronize the cells, but slower than our closed-loop approach. A periodic pulse train with maximum values $(\mu,\tau)=(0.5,50)$ yields a slow rate of convergence (red curve), while smaller values $(\mu,\tau)$ yield very large delays for the first iterations (green curve). Clearly, the optimal approach by~\cite{Wilson2014} outperforms our method in terms of convergence and time delays, however, our optimal control policy is easier to implement. Furthermore, we can parametrize our control signal with different (non-constant) basis functions.

\begin{figure}[t]\centering
\includegraphics[width = 0.85\columnwidth]{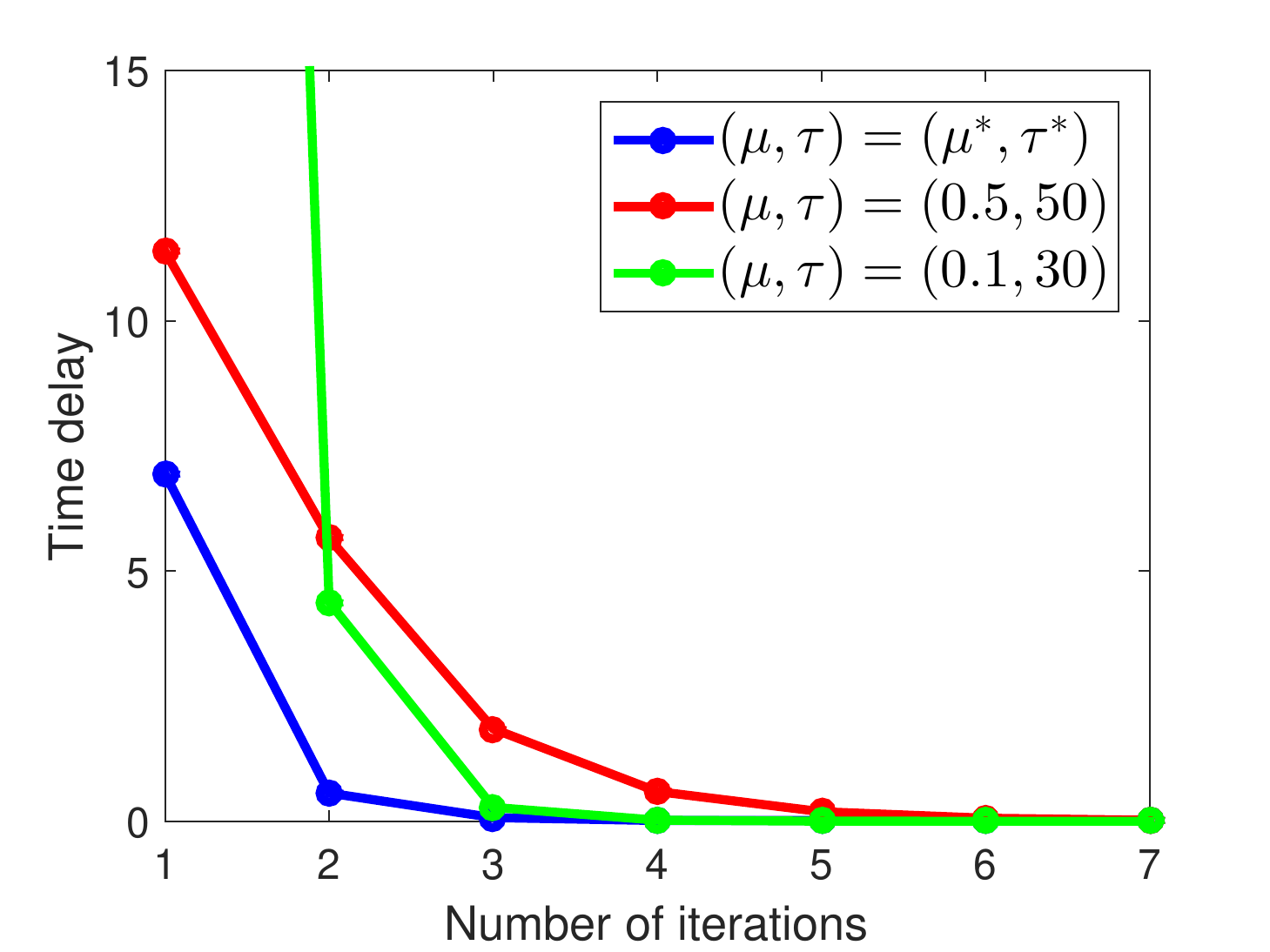}
\includegraphics[width = 0.85\columnwidth]{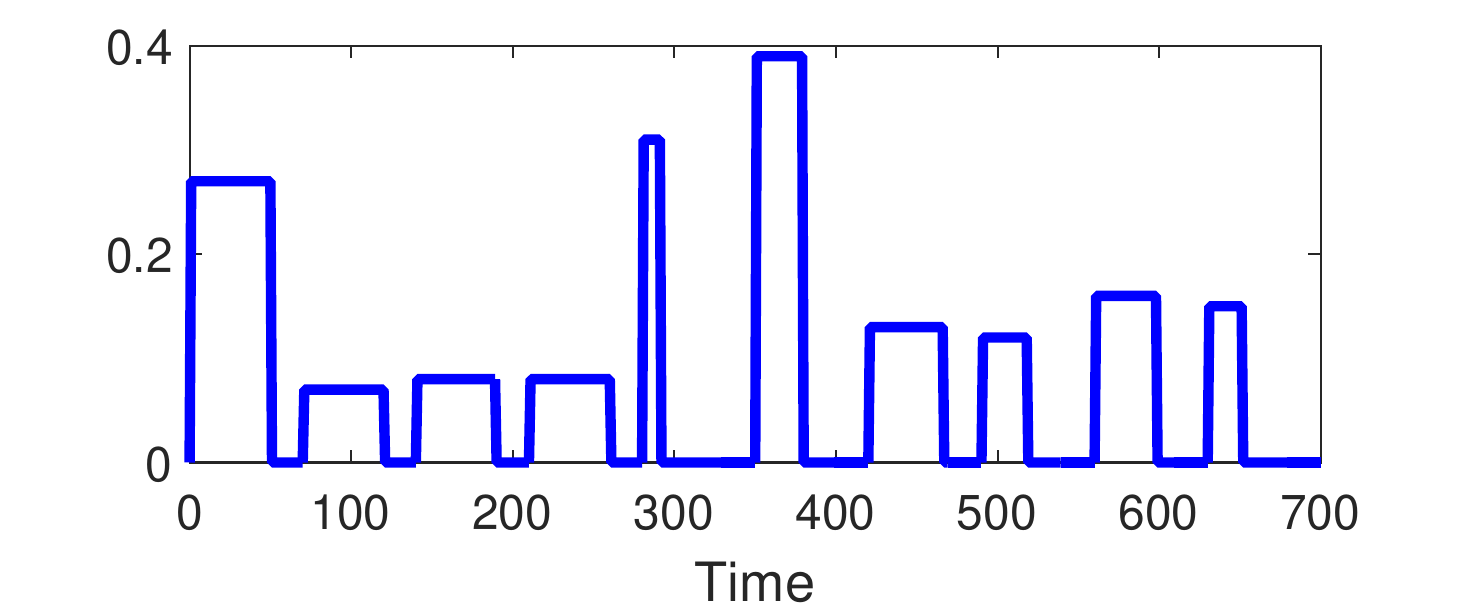}
\caption{\emph{Upper panel.} The maximum time delay between the FitzHugh-Nagumo cells decreases as the number of pulses increases and the cells eventually synchronize. The best performance is obtained with the closed-loop control based on the function $r$ (blue curve). Periodic pulses also synchronize the cells, but with a slower rate of convergence (red curve) or a large initial delay (green curve). 
	\emph{Lower panel.} The optimal pulse train consists of different optimal pairs $(\mu^\ast,\tau^\ast)$.} \label{fig:pulse_synchro}
\end{figure}

The proposed closed-loop control could be easily adapted to incorporate additional constraints (e.g. maximum energy) and provides a solution to the synchronization problem. This solution is a compromise between a simple but not optimal periodic pulse train and the complex exact solution of the optimal control problem established by~\cite{Wilson2014}. Future work could extend these preliminary results to more realistic cases, for instance where not all the states are observable.

\section{Conclusion}

In this paper, we studied a switching/convergence problem for monotone systems. Our solution reduces a dynamic optimization problem to the computation of the time-independent function $r$, which is defined using the Koopman operator. The properties of the function $r$ lead to straightforward solutions to a tradeoff between the convergence time and the energy budget. This approach can potentially be extended beyond monotone systems and switching/convergence problems. In this paper, we illustrate the possible benefits of a closed-loop solution for the switching problem. We also apply our framework to the synchronization of cardiac cells represented by non-monotone FitzHugh-Nagumo models. In this paper, we have not addressed partial state observability and/or partial controllability issues. This constitutes one of the future work directions.

\bibliography{Biblio}

\appendices
\section{Constant Control Signals are Optimal for a Minimum-Time Problem} \label{app:const-control}
Consider the following optimal control problem over bounded measurable control signals: 
\begin{align}
V(z, \mu, \beta) &= \inf\limits_{\begin{smallmatrix} \tau, u \in \cU_\infty([0,\mu]) \end{smallmatrix}}  \tau, \label{prob:opt-escape} \\
\notag \text{ subject to}&~\eqref{sys:f},\, x(0) = z,\, \\
\notag &x(\tau) \in \cC_\beta = \{y\in \R^n | s_1(y) = \beta \}, 
\end{align}
where  $s_1(x)$ is a $C^1$ increasing dominant eigenfunction defined on the basin of attraction of $x^\ast$. Under our assumptions, the solution to this problem is surprisingly straightforward.
\begin{prop}\label{thm:mon-val-fun}
	Let the system~\eqref{sys:f} satisfy Assumptions~{A1 -- A3}. Then \\	
	(i) If $s_1(z) < \beta$, then the optimal solution to~\eqref{prob:opt-escape}, if it exists, is $u(t) = \mu$ for all $t\in[0, \tau]$;\\	
	(ii) If $s_1(z) \ge \beta$, then the optimal solution to~\eqref{prob:opt-escape} is $u(t) = 0$ for all $t\ge0$. 	
	\end{prop}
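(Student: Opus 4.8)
The plan is to reduce both cases of~\eqref{prob:opt-escape} to one observation about extremal inputs. Because the flow is monotone in the control (Assumption~A3) and, by Proposition~\ref{prop:mon-eig-fun}(i), the dominant eigenfunction $s_1$ can be chosen increasing while $\lambda_1$ is real and negative, the constant inputs $u\equiv\mu$ and $u\equiv 0$ produce, among all admissible inputs, the pointwise largest (resp.\ smallest) value of $s_1$ along the trajectory; hence the target level set $\cC_\beta$ is crossed no later by the appropriate extremal input than by any competitor. I will also use the closed form $s_1(\phi(t,z,0))=s_1(z)\,e^{\lambda_1 t}$, which is immediate from the eigenfunction identity~\eqref{eq:property_eigenf}, together with $s_1\in C^1$ on $\cB(x^\ast)$ (Assumptions~A1--A2).

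For part~(i), I would fix an arbitrary feasible $u\in\cU_{\infty}([0,\mu])$ and let $\tau_u$ be its first arrival time in $\cC_\beta$, so $s_1(\phi(\tau_u,z,u))=\beta$. From $0\preceq u\preceq\mu$ pointwise and monotonicity we get $\phi(t,z,u)\preceq\phi(t,z,\mu)$, hence $s_1(\phi(t,z,u))\le s_1(\phi(t,z,\mu))$ for all $t\ge 0$; in particular $s_1(\phi(\tau_u,z,\mu))\ge\beta$, whereas $s_1(\phi(0,z,\mu))=s_1(z)<\beta$. Continuity of $t\mapsto s_1(\phi(t,z,\mu))$ and the intermediate value theorem then give a time $\hat\tau\in(0,\tau_u]$ with $s_1(\phi(\hat\tau,z,\mu))=\beta$; taking $\tau_\mu$ to be the first such time we obtain $\tau_\mu\le\tau_u$. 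Since this holds for every feasible $u$, the input $u\equiv\mu$ is feasible with arrival time $\tau_\mu\le\inf_u\tau_u=V(z,\mu,\beta)$, so $\tau_\mu=V(z,\mu,\beta)$; thus $u\equiv\mu$ attains the infimum and is the claimed optimal solution.

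For part~(ii), I would argue dually. If $s_1(z)=\beta$, then $\tau=0$ with $u\equiv 0$ is feasible and trivially optimal. Otherwise $s_1(z)>\beta$, and for any admissible $u$ (so $u\succeq 0$) monotonicity gives $\phi(t,z,0)\preceq\phi(t,z,u)$, hence $s_1(\phi(t,z,u))\ge s_1(\phi(t,z,0))=s_1(z)\,e^{\lambda_1 t}$ for all $t$. Feasibility then forces $0<\beta<s_1(z)$: if $s_1(z)\le 0$ the lower bound stays $\ge s_1(z)>\beta$, while if $s_1(z)>0$ it stays $>0$, so no nonpositive $\beta$ can be reached. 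Any feasible $u$ with arrival time $\tau_u$ therefore satisfies $\beta=s_1(\phi(\tau_u,z,u))\ge s_1(z)\,e^{\lambda_1\tau_u}$, which rearranges, using $\lambda_1<0$, to $\tau_u\ge\tfrac{1}{\lambda_1}\ln(\beta/s_1(z))$. Since $t\mapsto s_1(z)\,e^{\lambda_1 t}$ decreases monotonically from $s_1(z)$ to $0$, the input $u\equiv 0$ reaches $\cC_\beta$ exactly at $t=\tfrac{1}{\lambda_1}\ln(\beta/s_1(z))$, which is therefore the minimal time; hence $u\equiv 0$ is optimal.

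The step I expect to be the main obstacle is not any single estimate but the bookkeeping around existence, feasibility, and domains: one must check that the candidate constant-input trajectory never leaves $\cB(x^\ast)=\dom(s_1)$ (precisely the role of forward-invariance in Assumption~A3, and why the statement is hedged with ``if it exists''), that the crossing time produced by the intermediate value theorem is a genuine feasible arrival time rather than merely a lower bound, and, in part~(ii), that feasibility already pins down the sign of $\beta$ --- without which the explicit-decay argument would have nothing to cross. A minor alternative: when $\beta\le 0$ one can replace the continuity argument in part~(i) by the observation that $t\mapsto s_1(\phi(t,z,\mu))$ is nondecreasing wherever it is nonpositive, which follows from the non-strict form of Lemma~\ref{lem:r-prop}(ii); but for $\beta>0$ no such monotonicity is available, so the continuity argument seems unavoidable there.
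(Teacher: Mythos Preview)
Your argument is correct and follows essentially the same route as the paper: compare an arbitrary admissible input with the extremal constant input via monotonicity of the flow (Assumption~A3) together with the fact that $s_1$ is increasing (Proposition~\ref{prop:mon-eig-fun}), and conclude that the constant input reaches the level set $\cC_\beta$ no later. The only cosmetic difference is orientation: the paper fixes the arrival time $\tau$ of $u^0\equiv\mu$ and shows every competitor satisfies $s_1(\phi(t,z,u^\delta))<\beta$ for $t<\tau$, whereas you fix the competitor's arrival time $\tau_u$ and use the intermediate value theorem to produce an earlier (or equal) crossing for $u\equiv\mu$; your part~(ii) is more explicit than the paper's, which simply says the proof is ``similar to (i)''.
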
 
	\begin{proof}
		(i) Let $u^0(t) = \mu$ for all $t>0$, and $u^\delta(t)$ be any admissible control signal, then $u^0(t) \succeq u^\delta(t)$ for all $t\in[0,~\tau]$. Then by monotonicity we have $\phi(t, z, u^0(\cdot))\succeq \phi(t, z, u^\delta(\cdot))$, which leads to $s_1(\phi(t, z, u^0(\cdot)))\ge s_1(\phi(t, z, u^\delta(\cdot)))$ for all $t\ge 0$ due to Proposition~\ref{prop:mon-eig-fun}.  Hence
		\begin{align*}
		\beta &= s_1(\phi(\tau, z, u^0(\cdot)))\ge  s_1(\phi(\tau, z, u^\delta(\cdot))),\\
		\beta &> s_1(\phi(t, z, u^0(\cdot))) \ge  s_1(\phi(t, z, u^\delta(\cdot))) \textrm{ for } t<\tau,
		\end{align*}
		which implies that the target set $\cC_\beta^-$ is reached with $u^0(\cdot)$ at least as fast as with any other admissible control signal $u^\delta(\cdot)$. Therefore, the control signal $u^0(t) = \mu$ is an optimal solution of the problem.
		
		(ii) The proof is similar to the point (i). 
		\end{proof}
This result justifies our use of temporal pulses to solve convergence problems for monotone systems. The problem~\eqref{prob:opt-escape} has a direct relation to the function $r$. In particular, $s_1(x)<\beta$ implies that $r(x, \mu, V(x, \mu, \beta)) = \beta$ provided that the problem~\eqref{prob:opt-escape} has a solution. On the other hand, there might be some values $\tau \ne V(x, \mu, \beta)$ such that  $r(x, \mu, \tau) = \beta$. In general, if $s_1(x)<\beta$, then 
\begin{gather}
V(x, \mu, \beta) = \min \{\tau \in \Rp | r(x,\mu,\tau) = \beta\},
\end{gather}
provided that the solution exists. 
In particular, if the premise of the points (ii) and (iii) in Lemma~\ref{lem:r-prop} holds then $V(x, \mu, \beta) = \tau$ if and only if $r(x,\mu,\tau) = \beta$, which again justifies our use of the pulse control function in these cases.
\section{Level Sets of the Function $r$} \label{app:additional-res}
If the premise of the points (ii) and (iii) in Lemma~\ref{lem:r-prop} hold, then the level sets of $r$ are the graphs of strictly decreasing functions, a result which simplifies their computations as discussed in~\cite{sootla2016nolcos}. The algorithms describing this procedure were developed by~\cite{sootla2016basins, sootla2016geometry} and~\cite{kim2016directed}.

\begin{cor} \label{cor:mon-switch-iso-mon-sys}
	Let the system~\eqref{sys:f} satisfy Assumptions~{A1}, {A4}, {A6}, {A7}, and $\alpha > 0$. \\
	(i) If $s_1(x) < -\alpha$, then $\{(\mu,\tau) \in \Rp^2| r(x, \mu,\tau) = -\alpha \}$ is a graph of a strictly decreasing function, i.e., this set does not contain pairs $(\mu_1, \tau_1)\ne (\mu_2, \tau_2)$ such that $\mu_1\le \mu_2$ and $\tau_1 \le \tau_2$;\\
	(ii) If $s_1(x) < \alpha$ and $f(x,\eta)\succeq 0$, then $\{(\mu,\tau) \in \Rp^2 | r(x, \mu,\tau) = \alpha \}$ is a graph of a strictly decreasing function for $\mu>\eta$.
	\end{cor}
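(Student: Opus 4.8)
The plan is to reduce the two-dimensional level-set statement to a one-dimensional monotonicity argument along the vertical lines $\{\mu=\text{const}\}$, and then to transfer monotonicity across lines using the strict monotonicity of $r$ in $\mu$. Fix an admissible $\mu$ and consider the section $\rho_\mu(\tau):=r(x,\mu,\tau)$, which is $C^1$ by Lemma~\ref{lem:r-prop} and satisfies $\rho_\mu(0)=s_1(\phi(0,x,\mu))=s_1(x)$ by Definition~\ref{def:control-smth-function}. If one shows that $\rho_\mu$ is strictly increasing on the relevant sub-range, then for each $\mu$ the equation $\rho_\mu(\tau)=c$ (with $c=-\alpha$ in part (i), $c=\alpha$ in part (ii)) has at most one solution $\tau(\mu)$; hence the level set is the graph of a partial function $\tau(\cdot)$ over the $\mu$-axis, and a short comparison via $\partial_\mu r>0$ then forces $\tau(\cdot)$ to be strictly decreasing, which is exactly the claimed absence of two distinct comparable points in the level set.

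\textbf{Part (i).} Here $s_1(x)<-\alpha<0$, so the section starts strictly below the target level, $\rho_\mu(0)=s_1(x)<-\alpha$. Lemma~\ref{lem:r-prop}(ii) gives $\rho_\mu'(\tau)>0$ whenever $\rho_\mu(\tau)\le 0$, so the first task is to show that $\{\tau\ge 0:\rho_\mu(\tau)\le 0\}$ is an interval $[0,b_\mu]$ with $b_\mu\in(0,\infty]$. This is a ``once positive, stays positive'' argument: if $\rho_\mu(\tau_0)>0$ but $\rho_\mu$ returned to $\le 0$ afterwards, then at the first subsequent zero $\tau_1>\tau_0$ one has $\rho_\mu(\tau_1)=0$ and, by Lemma~\ref{lem:r-prop}(ii), $\rho_\mu'(\tau_1)>0$, forcing $\rho_\mu<0$ immediately to the left of $\tau_1$ and contradicting $\rho_\mu>0$ on $[\tau_0,\tau_1)$. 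Consequently $\rho_\mu$ is strictly increasing on $[0,b_\mu]$, the equation $\rho_\mu(\tau)=-\alpha$ has a unique root $\tau(\mu)\in(0,b_\mu)$ whenever $-\alpha$ lies in the range of $\rho_\mu$ on that interval (and the vertical line misses the level set otherwise), while for $\tau>b_\mu$ we have $\rho_\mu(\tau)>0>-\alpha$.

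\textbf{Monotonicity in $\mu$ and conclusion of (i).} Take $\mu_1<\mu_2$ with $\tau(\mu_1)$ defined. Since $\partial_\mu r>0$ (Lemma~\ref{lem:r-prop}(i)), $r(x,\mu_2,\tau(\mu_1))>r(x,\mu_1,\tau(\mu_1))=-\alpha$. Using the structure of $\rho_{\mu_2}$ established above --- strictly increasing on $[0,b_{\mu_2}]$, equal to $-\alpha$ only at $\tau(\mu_2)$, strictly above $-\alpha$ for larger $\tau$ in $[0,b_{\mu_2}]$ and also for $\tau>b_{\mu_2}$ --- the inequality $\rho_{\mu_2}(\tau(\mu_1))>-\alpha$ forces $\tau(\mu_1)>\tau(\mu_2)$ (and if $\tau(\mu_2)$ is undefined, the line $\mu=\mu_2$ carries no point of the level set). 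Hence the level set is the graph of the strictly decreasing map $\tau(\cdot)$: two distinct comparable points would require either $\mu_1=\mu_2$ (contradicting single-valuedness) or $\mu_1<\mu_2$ (contradicting $\tau(\mu_1)>\tau(\mu_2)$).

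\textbf{Part (ii).} This is the same scheme, but easier: the hypothesis $f(x,\eta)\succeq 0$ together with Lemma~\ref{lem:r-prop}(iii) gives $\partial_\tau r(x,\mu,\tau)>0$ for \emph{every} finite $\tau>0$ as soon as $\mu>\eta$, i.e.\ unconditionally rather than only where $r\le 0$. So for each $\mu>\eta$ the section $\rho_\mu$ is strictly increasing on all of $[0,\infty)$, starting from $\rho_\mu(0)=s_1(x)<\alpha$; thus $\rho_\mu(\tau)=\alpha$ has at most one root $\tau(\mu)\in(0,\infty)$, and the $\partial_\mu r>0$ comparison (now with level $\alpha$) gives $\tau(\mu_1)>\tau(\mu_2)$ for $\eta<\mu_1<\mu_2$. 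Hence the level set restricted to $\mu>\eta$ is the graph of a strictly decreasing function. Everything is read within $\dom(r)$, where $r$ is finite and $C^1$. The one delicate step is the ``once positive, stays positive'' claim in part (i), which is what upgrades the conditional derivative bound of Lemma~\ref{lem:r-prop}(ii) to strict monotonicity of $\rho_\mu$ up through the level $-\alpha$; the remaining implicit-function and intermediate-value bookkeeping is routine.
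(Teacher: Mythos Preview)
Your argument is correct and uses the same inputs as the paper's proof---namely the strict positivity of $\partial_\mu r$ and of $\partial_\tau r$ on the relevant region provided by Lemma~\ref{lem:r-prop}(i)--(iii)---but the mechanics differ slightly. The paper invokes the implicit function theorem at any point of the level set $r(x,\mu,\tau)=-\alpha$: since $\partial_\tau r>0$ there, one obtains a local $C^1$ representation $\tau=g(\mu)$ with $\partial_\mu g=-\partial_\mu r/\partial_\tau r<0$, and then declares the level set a graph of a strictly decreasing function. You instead work section by section, showing first that each $\rho_\mu(\cdot)$ can hit the level at most once (your ``once positive, stays positive'' step makes explicit why $\rho_\mu$ cannot overshoot $0$ and return to $-\alpha$), and then comparing $\tau(\mu_1)$ with $\tau(\mu_2)$ directly via $\partial_\mu r>0$. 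Your route is more elementary (no implicit function theorem) and is more explicit about the \emph{global} single-valuedness of $\tau(\mu)$, which the paper's local IFT argument leaves somewhat implicit; the paper's route is shorter and additionally yields $C^1$ regularity of the level curve. Part~(ii) is handled identically in spirit by both.
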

	\begin{proof}(i) First, we make sure that the function $r(x,\cdot,\cdot)$ is increasing.  This is guaranteed if $\partial_\mu r(x,\mu,\tau)$, $\partial_\tau r(x,\mu,\tau)$ are positive. \\		
		Due to Lemma~\ref{lem:r-prop} we have that $\partial_\tau r(x, \mu, \tau) >0$ as long as $r(x, \mu, \tau)\le 0$, and that $\partial_\mu r(x,\mu,\tau)>0$.
		Now since $\partial_\tau r(x, \mu, \tau) >0$ (for all $(\mu,\tau)$ such that $r(x,\mu,\tau)=-\alpha<0$), the implicit function theorem implies that there exists a function $\tau = g(\mu, \alpha)$ such that $r(x, \mu, g(\mu, \alpha)) = -\alpha$. Furthermore, the function $g$ is $C^1$ in $\mu$ and $\partial_\mu g = - \partial_\mu r(x, \mu, \tau) / \partial_\tau r(x, \mu, \tau) < 0$ in the neighborhood of the level set $\{ (\mu, \tau)  | r(x, \mu, \tau) =-\alpha\}$. Therefore the level set $\{\tau, \mu | r(x, \mu, \tau) =-\alpha\}$ is a graph of a strictly decreasing function in $\mu$. It also directly follows that  the level set $\{ (\mu , \tau) | r(x, \mu, \tau) =-\alpha\}$ is a graph of a strictly decreasing function in $\tau$.
		
		(ii) We have that $\partial_\mu r(x,\mu,\tau) > 0$. Now positivity of $\partial_\tau r(x,\mu,\tau)$ for positive $r(x,\mu,\tau)$ follows from the point (iii) in Lemma~\ref{lem:r-prop}. The rest of the proof follows on similar lines as in (i).
		\end{proof}
\section{Purely Data-Based Control Setting}\label{app:DMD}
One can compute $r$ from sampled data points (e.g. obtained from experiments). If data points
 \begin{multline}
 \label{eq:time_series}
 z[k]=\phi(\tau+k T_s, x, \mu h(\cdot, \tau)) \\= \phi(k T_s,\phi(\tau, x, \mu),0)
 \end{multline}
 are given (with the sampling time $T_s$), then the DMD algorithm can be used to compute $s_1(\phi(\tau, x, \mu ))=r(x,\mu,\tau)$. We illustrate this in the following subsections.
\subsection{Dynamic mode decomposition algorithm} 
The dynamic mode decomposition algorithm can be used to estimate the eigenfunctions of the Koopman operator from data. Assume that $N$ snapshots of $m$ trajectories of the (unforced) system are given, i.e.
\begin{equation*}
z^j[k] = g(\phi(k T_s,x_0^j,0))\,,   j=1,\dots,m\,, k=1,\dots,N
\end{equation*}
where $T_S$ is the sampling time, $x_0^j \in \mathbb{R}^n$ is the initial condition, and $g$ is an observable (typically measuring one of the states). The algorithm yields the so-called DMD modes and eigenvalues, which are related to the eigenfunctions and eigenvalues of the Koopman operator. It is described in Algorithm~\ref{alg:dmd}. We will not elaborate further on the details of the method and refer the reader to~\cite{Schmid2010,Tu2014}. We only note that the matrix $Y X^{\dagger}$, where $X^{\dagger}$ is the pseudo-inverse of $X$, is a finite-dimensional approximation of the Koopman semigroup. Therefore the eigenvectors of $Y X^{\dagger}$ can be used to estimate the eigenfunctions of the Koopman operator.

\begin{algorithm}[t]
	\caption{Dynamic Mode Decomposition}
	\label{alg:dmd}
	\begin{algorithmic}[1]
		\State {\bf Input:} Data points $z^j[k]$,  $j=1,\dots,m$, $k=1,\dots,N$;
		\State {\bf Output:} DMD modes $V_l$ and associated DMD eigenvalues $\nu_l$ ($l=1,\dots,m$); Koopman eigenfunctions $s_l(x_0^j)$ and eigenvalues $\lambda_l$;
		\State Construct the matrices
		\begin{equation*}
		\begin{split}
		& X=\left[ \begin{array}{ccc} z^1[1] & \cdots &  z^1[N-1]\\
		\vdots & & \vdots \\
		z^m[1] & \cdots &  z^m[N-1] \end{array} \right]\in \mathbb{R}^{m \times (N-1)} \\
		& Y=\left[ \begin{array}{ccc} z^1[2] & \cdots &  z^1[N]\\
		\vdots & & \vdots \\
		z^m[2] & \cdots &  z^m[N] \end{array} \right]\in \mathbb{R}^{m \times (N-1)} \,;
		\end{split}
		\end{equation*}
		\State Compute the reduced singular value decomposition of $X$, i.e.
		\begin{equation*}
		X = U \Sigma V^* \,;
		\end{equation*}
		\State Construct the matrix
		\begin{equation*}
		T = U^* Y V \Sigma^{-1} \,;
		\end{equation*}
		\State Compute the eigenvectors $w_l$ and eigenvalues $\nu_l$ of $T$, i.e.
		\begin{equation*}
		T w_l = \nu_l w_l \,;
		\end{equation*}
		\State Compute the DMD modes $V_l = U w_l$ (the associated DMD eigenvalues are $\nu_l$);
		\State The Koopman eigenfunction $s_l$ at $x_0^j$ is given by the $j$th component of the DMD mode $V_l$, and the corresponding Koopman eigenvalue is given by $\lambda_l=\ln(\nu_l)/T_s$.
	\end{algorithmic}
\end{algorithm}
\subsection{Application of the DMD Algorithm to Toxin-antitoxin System} \label{ss:dmd}
\begin{figure}[t]\centering
	\includegraphics[width = 0.7\columnwidth]{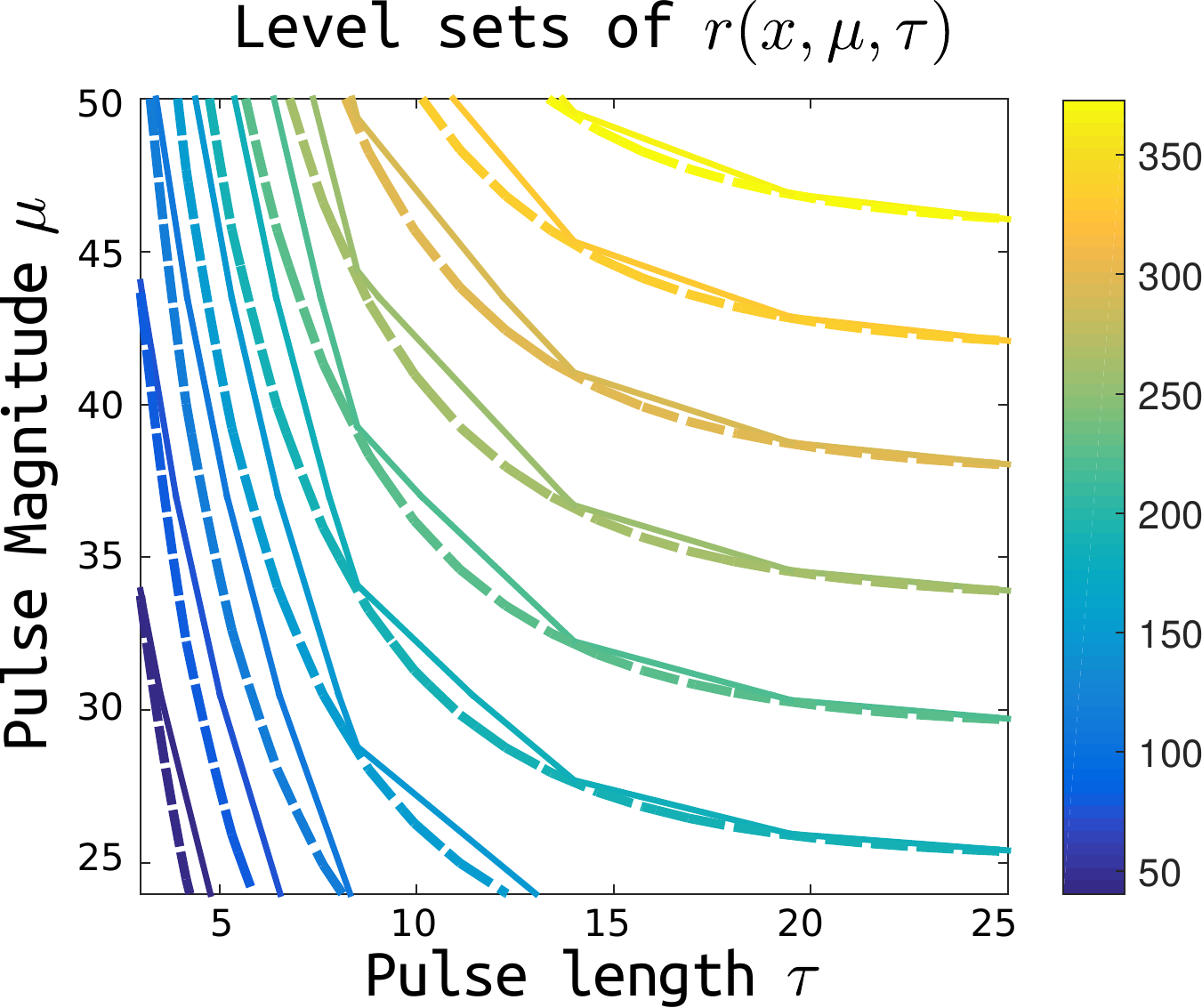}
	\caption{Level sets of $r$ computed with the DMD algorithm (solid lines) and with Laplace averages (dashed lines).}\label{switching_DMD}
\end{figure}
Consider the toxin-antitoxin system studied by~\cite{cataudella2013conditional}. We will use the model and parameter values as described in~\cite{sootla2015koopman}. 

The system is bistable with two exponentially stable equilibria $x^\bullet$ and $x^\ast$, but not monotone with respect to any orthant. However, it was established by~\cite{sootla2015koopman} that it is \emph{eventually monotone}, i.e. the flow satisfies the monotonicity property after some initial transient. The level sets of the function $r(x^\bullet, \mu, \tau)$ (in the space of the parameters $(\mu, \tau)$) were computed by~\cite{sootla2016nolcos} and it appears that these sets are monotone curves although the system is not monotone. 

Since we cannot guarantee that the function $r$ is increasing, we need to develop a different computational approach applicable to a broader class of systems. One possibility is to compute the function $r$ from data. To do so, we used the DMD algorithm with $25$ time series of the form~\eqref{eq:time_series} (each of which corresponds to a different pair $(\mu,\tau) \in [24,50] \times [3,25]$). For each time series, we used only $4$ snapshots over the time interval $[\tau+12.5, \tau+50]$ (i.e. $k \in \{1,2,3,4\}$ with $T_s=12.5$ in~\eqref{eq:time_series}). The level sets of $r$ computed with the DMD algorithm are accurate and similar to the ones obtained with Laplace averages (Figure~\ref{switching_DMD}). We stress that this example is only a proof-of-concept, and further research is required to automate the application of DMD to this problem.

\end{document}